\numberwithin{figure}{section}
\newtheorem{theorem}{Theorem}[section]
\newtheorem{lemma}[theorem]{Lemma}
\newtheorem{corollary}[theorem]{Corollary}
\newtheorem{main theorem}[theorem]{Main Theorem}
\newtheorem{definition}[theorem]{Definition}
\newtheorem{problem}[theorem]{Problem}
\newtheorem{example}[theorem]{Example}
\newtheorem{remark}{Remark}[section]
\numberwithin{equation}{section}
\newcommand{\Lie}{\mathcal{L}}
\begin{document}

\title{The edge-girth-regularity of Wenger graphs}

\author{Fuyuan Yang$^{1,2}$}

%
\author{Qiang Sun$^3$}

%
\author{Chao Zhang$^{1,*}$}

\thanks{* The corresponding author.}

\maketitle
\begin{center}	
$^1$Department of Mathematics, School of Mathematics and Statistics, Guizhou University, 

550025, Guiyang, China

$^{2}$Department of Public Basic Education, Moutai Institute, 564500, Renhuai, China

$^3$School of Mathematical Science, Yangzhou University, 225009, Yangzhou, China

	\bigskip
	{ E-mails: fyyang9@163.com; qsun1987@163.com; zhangc@amss.ac.cn }
\end{center}

\begin{abstract}
Let $n\ge 1$ be an integer and $\mathbb{F}_q$ be a finite field of characteristic $p$ with $q$ elements.
In this paper, it is proved that the Wenger graph $W_n(q)$ and linearized Wenger graph $L_m(q)$ are edge-girth-regular $(v,k,g,\lambda)$-graphs, and the parameter $\lambda$ of graphs $W_n(q)$ and $L_m(q)$ is completely determined. Here, an edge-girth-regular graph $egr(v,k,g,\lambda)$ means a $k$-regular graph of order $v$ and
girth $g$ satisfying that any edge is contained in $\lambda$ distinct $g$-cycles. As a direct corollary, we obtain the number of girth cycles of graph $W_n(q)$, and
the lower bounds on the generalized Tur\'an numbers $ex(n, C_{6}, \mathscr{C}_{5})$ and $ex(n, C_{8}, \mathscr{C}_{7})$, where $C_k$ is the cycle of length $k$ and  $\mathscr{C}_k = \{C_3, C_4, \dots , C_k\}$.
Moreover, there exist a family of $egr(2q^3,q,8,(q-1)^3(q-2))$-graphs for $q$ odd,
and the order of graph $W_2(q)$ and extremal $egr(v,q,8,(q-1)^3(q-2))$-graph have same asymptotic order for $q$ odd.
\end{abstract}

\noindent{\bf Keywords:} Wenger graph, edge-girth-regular graphs, generalized Tur\'an number, Lie algebra.

\medskip
\noindent{\bf Mathematics Subject Classification:} 05C25, 05C35, 05E15.
\section{Introduction}
 All the notions of graph theory in this paper can be found in \cite{BM76,Bi92}. The construction of algebraically defined graph $B\Gamma_n(R;f_2,\dots,f_n)$ is motivated by the problems from extremal graph theory (see \cite{LFU93,LFU95,LFU99,LWA01,LM02,TAT12} for details). Some important subfamilies of algebraically defined graphs have been widely studied. The Wenger graph $W_n(q)$ is an important graph family used to construct graphs on extremal problems about forbidden small even cycles, while the linearized Wenger graph $L_m(q)$ is concerned in spectral graph theory.
 Wenger \cite{W91} introduced a family of $p$-regular bipartite graphs $H_k(p)$, which provided a new example for the lower bound of Tur$\acute{\text{a}}$n number the cycle $C_{2k}$, where $k=2,3,5$. Several years later, a family of $q$-regular bipartite graphs was explicitly constructed by Lazebnik and Viglione in \cite{LFV02}, which is called Wenger graph $W_n(q)=B\Gamma_{n+1}(\mathbb{F}_q;p_1l_1,\dots,p_1l_i,\dots,p_1l_{n})$ for the reason that $H_k(p)$ is isomorphic to $W_k(p)$ for all $k \ge 1$ and $p$ prime.
Another important class of algebraically defined graphs is Lie graphs over finite fields induced by generalized Kac-Moody algebras \cite{TAT12}.
These Lie graphs are closely related to the research of geometry objects \cite{WF64,LFU93, LFU94, LFS17,VAU03,TAT12,AJW10}.
For example, Lie graphs $\Lie(M_1,3,q)$, $\Lie(M_2,4,q)$ and $\Lie(M_3,6,q)$ over finite fields induced by finite dimensional generalized Kac-Moody algebras are precisely the affine part of the projective plane, generalized quadrangle and generalized hexagon, respectively.

The girth cycles have significant applications in variant problems from extremal
graph theory, finite geometry, coding theory etc., and determining all the girth cycle of graphs is a classical problem in graph theory \cite{C2021,GGMV20,SU09,SW17,T1983}.
The existence of some cycles with certain lengths in Wenger graph $W_n(q)$ and linearized Wenger graph $L_m(q)$ is studied in \cite{LTW18} and \cite{WY17}, respectively. Moreover, Wenger graph $W_n(q)$ is $q$-regular semisymmetric (i.e., regular, edge-transitive and non-vertex-transitive) graph \cite{LFV02}, for any $n \ge 3$ and $q \ge 3$, or $n = 2$ and $q$ odd, and $\Lie(M_3,6,q)$ is $q$-regular semisymmetric graph \cite{YSZ22}, for $q$ is not powers of $2$ or $3$.
More problems and results appear in \cite{LWA01,LFS17} and the references therein.

Inspired by the Moore graphs with even girth, Jajcay et al. \cite{JKM18} introduced a new regularity of graphs called edge-girth-regularity. An {\it edge-girth-regular graph $egr(v,k,g,\lambda)$} is a $k$-regular graph of order $v$ and girth $g$ in which every edge is contained in $\lambda$ distinct $g$-cycles. This concept is a generalization of the well-known concept of $(v, k,\lambda)$-edge-regular graphs, which count the number of triangles. They constructed infinite edge-girth-regular graph families with certain parameters $(k,g,\lambda)$, for example, there exist infinitely many $egr(v,k,g,2)$-graphs for every $k\ge3$ and $g \ge6$. The smallest order of an $egr(v,k,g,\lambda)$-graph fixing the triplet $(k,g,\lambda)$ is called {\it extremal edge-girth-regular graph} in \cite{DFJR21}. Two families of edge-girth-regular graphs with girth $5$ or $6$ are introduced,  and the following conjecture is raised \cite{GAP22}, which is still open until now.

\textbf{ Conjecture 1.} \label{conj 1}For $q \ge 3$ a prime power and $g \in \{8, 12\}$ there exists a family of $egr(2q^{\frac{g-2}{2}},q,g,(q-1)^{\frac{g-2}{2}}(q-2))$-graphs. These graphs are extremal edge-girth-regular graphs.

The present paper mainly considers the edge-girth-regularity of $W_n(q)$ and $L_m(q)$ (compare section 5.2 in \cite{LWA01}). It is proved that $W_n(q)$ and $L_m(q)$ are $egr(v,k,g,\lambda)$-graph,  and the number of its $g$-cycles passing through any edge in $W_n(q)$ and $L_m(q)$ is determined  respectively in the following two theorems.
It is worth mentioning that Theorem 1 partially answers the above conjecture when $g = 8$ and $q$ odd, i.e., there exists a family of $egr(2q^{\frac{g-2}{2}},q,g,(q-1)^{\frac{g-2}{2}}(q-2))$-graphs for $g = 8$ and $q$ odd.


\textbf{Theorem 1.} Let $q$ be a prime power and $n$ be an positive integer. Then Wenger graph $W_n(q)$ is an $egr(2q^{n+1}, q,g_n,\lambda_n)$-graph for all $n$ and $q$. More precisely,
\begin{enumerate}
\item $W_1(q)$ is an $egr(2q^2, q,6,(q-1)^2(q-2))$-graph for $q\ge 3$, and $W_1(2)$ is an $egr(8, 2,8,1)$-graph;
\item the girth of $W_2(q)$ is $8$ for all $q$, and $\lambda_2 = (q-1)^3(q-2)$ for $q$ odd, $\lambda_2 = (q-1)^3(q-3)+2(q-1)^2$ for $q$ even;
\item if $n\geq 3$, then the girth $g_n$ of $W_n(q)$ is $8$ and $\lambda_n = (q-1)^3$ .
\end{enumerate}

Given a graph $H$ and a set of graphs $\mathscr{F}$, let $ex(n, H, \mathscr{F})$ denote the maximum possible number of copies of $H$ in an $\mathscr{F}$-free graph on $n$ vertices. Solymosi and Wong \cite{SW17} proved that if the Erd\H{o}s's Girth Conjecture holds, then $ex(n, C_{2\ell}, \mathscr{C}_{2\ell-1}) = \Theta(n^{2\ell/(\ell-1)})$ for any $\ell\ge 3$, where $C_k$ is the cycle of length $k$ and  $\mathscr{C}_k = \{C_3, C_4, \dots , C_k\}$. Gerbner et al. \cite{GGMV20} prove that their result is sharp in the sense that forbidding any other even cycle decreases the number of $C_{2\ell}$'s significantly. Suprisingly, Theorem 1 implies that, $ex(2q^2, C_{6}, \mathscr{C}_{5}) \ge \frac{q^3(q-1)^2(q-2)}{6}$ and $ex(2q^3, C_{8}, \mathscr{C}_{7}) \ge \frac{q^4(q-1)^3(q-2)}{8}$ for $q$ odd, and the extremal graphs are Wenger graphs $W_1(q)$ and $W_2(q)$ respectively (Corollary \ref{ex(n,C,C)}).

\textbf{Theorem 2.} Let $q = p^e$ with $e \ge 1$ and $p$ a prime.
\begin{enumerate}
\item If $p$ is an odd prime and $m \ge 1$, then $L_m(q)$ is an $egr(2q^{m+1}, q,6,\lambda_m)$-graph with  $\lambda_1=(q-1)^2(q-2)$ and $\lambda_m=(q-1)^2(p-2)$ for $m\ge 2$.
\item If $p=2$, $e \ge 2$ and $m = 1$, then $L_1(q)$ is an $egr(2q^{2}, q,6,\lambda_1)$-graph with $\lambda_1=(q-1)^2(q-2)$.
\item If $p=2$, $e = m = 1$ or $e \ge 1$, $m \ge 2$, then $L_m(q)$ is an $egr(2q^{m+1}, q,8,\lambda_m)$-graph, where $\lambda_m=(q-1)^3+(q-1)^2(q-2)$.
\end{enumerate}

The paper is organized as follows:
Section \ref{ADG} recalls the definition and some properties of the algebraically defined graphs.  Section \ref{section def the some graphs} and Section \ref{Lm is egr} are devoted to proving our main theorems.
In the final part, we discuss the order of extremal $egr(v,q,g,(q-1)^{\frac{g-2}{2}}(q-2))$-graph for $g =8$.

\section{Algebraically defined graphs} \label{ADG}
The present section mainly recall the definition of algebraically defined graphs. Throughout this paper,
we always denote by $V (\Gamma)$ the vertex set and by $E(\Gamma)$ the edge set for a graph $\Gamma$. The element of $E(\Gamma)$ joining two vertices $x, y \in V (\Gamma)$ will be written as $x\sim y$.

\begin{definition} \label{B Gamma n graph by equ}\rm \cite{LWA01}
Let $R$ be an arbitrary commutative ring with multiplicative identity, and $f_i : R^{2i-2} \rightarrow R $ be an arbitrary function, $2\leq i \leq n $. We define a bipartite graph
$B{\Gamma}_n = B{\Gamma}_n(R; f_2, \dots , f_n)$ as follows.
The set of vertices $V(B{\Gamma}_n)$ is the disjoint union of two copies of $R^n$, one is denoted by $P_n$ and the other by $L_n$.
Any element $p$ in $P_n$ is called a {\it point}, the element $l$ in $L_n$ is called a {\it line}, and denote by $(p) \in P_n$ and $[l] \in L_n$ respectively. Moreover,  a point
$(p) = (p_1, p_2, \dots , p_n)$ and a line $[l] = [l_1, l_2, \dots , l_n]$ are adjacent if and only if the
following $n-1$ relations on their coordinates hold:
\begin{align}
&p_2 + l_2 = f_2(p_1, l_1),         \nonumber \\
&p_3 + l_3 = f_3(p_1, l_1, p_2, l_2), \nonumber \\
&\;\;\;\;\;\; \dots   \;\;\;\; \dots                            \nonumber \\
&p_n + l_n = f_n(p_1, l_1, p_2, l_2, \dots , p_{n-1}, l_{n-1}). \nonumber
\end{align}
Note that in some cases, the construction has a slight difference with equations, for example, $a_ip_i+ b_il_i= f_i(p_1,l_1,\dots,p_{i-1},l_{i-1})$ for all $2 \le i \le n$ where $a_i$ and $b_i$ are fixed units in $R$. This family of graphs are also called  to be {\it algebraically defined graphs} (ADG for short).

Moreover, if a graph $\Gamma$ is isomorphic to $B\Gamma_n(R;f_1,\cdots,f_n)$ for some $R$ and $f_1, f_2, \cdots, f_n$, then $B\Gamma_n(R;f_1,\cdots,f_n)$ is called to be an {\it equation representation} of $\Gamma$.
\end{definition}
Evidently, if the order of $R$ is $r$, then the number of vertices of graph $B\Gamma_n$ is $2r^n$, and $B\Gamma_n$ is $r$-regular since for any fixed vertex $a \in V(B{\Gamma}_n)$, and an arbitrary element $x\in R$, there is a unique vertex $b\in V(B{\Gamma}_n)$ such that $b$ is adjacent to the vertex $a$ such that the first coordinate of $b$ is $x$.

\begin{example}\rm
The Wenger graph $W_n(q)$ (cf. \cite{LFV02}) is defined over a finite field $\mathbb{F}_q$ and satisfies the following equations:
\begin{align}
p_i + l_i = p_1l_{i-1},\quad \text{where}\;\;i=2,\dots,n+1.         \nonumber
\end{align}
Moreover, the linearized Wenger graph $L_m(q)$ (see \cite{CXW15,WY17}) is defined by following equations:
\begin{align}
p_i + l_i = p_1^{p^{i-2}}l_1,\quad \text{where}\;\;i=2,\dots,m+1.         \nonumber
\end{align}
\end{example}
Generalized Kac-Moody algebras, as an important family of Lie algebras, are generated by some relations and a generalized Cartan matrix. The following construction of graphs corresponding to the Lie algebra $\Lie(M_1,3,q)$, $\Lie(M_2,4,q)$ and $\Lie(M_3,6,q)$ can be found in paper \cite{TAT12}. For the simplification of notations, we still denote by $\Lie(M_1,3,q)$, $\Lie(M_2,4,q)$ and $\Lie(M_3,6,q)$ the corresponding Lie graphs throughout the paper.

The graph $\Lie(M_1,3,q)$ is an ADG determined by one relation:
$$p_2+l_2=p_1l_1.$$

The graph $\Lie(M_2,4,q)$ is an ADG defined by the following two relations:
\begin{align}
p_2+l_2=p_1l_1,  \nonumber \\
p_3+l_3=p_1l_2. \nonumber
\end{align}

The graph $\Lie(M_3,6,q)$ is an ADG determined by the relations as follows.
\begin{align}
p_2+l_2&=p_1l_1,  \nonumber \\
p_3+l_3&=p_1l_2, \nonumber \\
p_4+l_4&=p_1l_3,   \nonumber \\
p_5+l_5&=p_2l_3-2p_3l_2+p_4l_1. \nonumber
\end{align}

\begin{remark} {\rm \begin{enumerate}
\setlength{\leftmargin}{2em}
\item By definition,  $\Lie(M_1,3,q)=W_1(q)$ and  $\Lie(M_2,4,q)=W_2(q)$ are Wenger graphs, but $\Lie(M_3,6,q)$ is not.
\item The Wenger graph $W_n(q)$ has different equation representations, for example, the graph $W_2(q)$ is isomorphic to $B\Gamma_2(\mathbb{F}_q;p_1l_1,p_1l_{1}^{2})$. In the following discussion, we will use an equation representation of $W_n(q)$ as $B\Gamma_{n+1}(\mathbb{F}_q;p_1l_1,\dots,p_1l_{1}^{i},\dots,p_1l_{1}^{n})$,  see paper \cite{CLL14} for details.
\end{enumerate}
}
\end{remark}


\section{ The edge-girth-regularity and the girth cycles in Wenger graphs}\rm \label{section def the some graphs}

An edge-girth-regular graph $egr(v,k,g,\lambda)$ is a $k$-regular graph of order $v$ and
girth $g$ satisfying that any edge is contained in $\lambda$ distinct $g$-cycles.
For example, the Petersen graph is an $egr(10,3,5,4)$-graph and a complete graph $K_n$ is an $egr(n,n-1,3,n-2)$-graph for $n\ge 3$.

Note that all edge-transitive graphs are edge-girth-regular graphs since any automorphism of graphs preserves cycles (see \cite{JKM18}).
Hence, the following two lemmas show that graphs $\Lie(M_3,6,q)$ and $W_n(q)$ are edge-girth-regular graph.
\begin{lemma}\cite[Theorem 3.1(c)]{LFU93}\label{L is et}
The graphs $\Lie(M_1,3,q)$, $\Lie(M_2,4,q)$ and $\Lie(M_3,6,q)$ are edge-transitive.
\end{lemma}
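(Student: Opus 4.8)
The plan is to prove that $\Lie(M_1,3,q)$, $\Lie(M_2,4,q)$ and $\Lie(M_3,6,q)$ are edge-transitive by exhibiting, for each of these three algebraically defined graphs, a group of automorphisms acting transitively on the edge set. Since all three graphs are bipartite with parts $P$ and $L$, and each is $q$-regular, edge-transitivity will follow once I show that the automorphism group is transitive on flags $\{(p),[l]\}$ with $(p)\sim[l]$; equivalently, since the graphs turn out to be vertex-transitive on each side (but not across sides in general), it suffices to (i) fix one incident pair $(p^0)\sim[l^0]$, (ii) produce automorphisms moving $(p^0)$ to an arbitrary point $(p)$, and then (iii) among the stabilizer of $(p)$, produce automorphisms moving the image of $[l^0]$ to an arbitrary line $[l]$ incident to $(p)$.

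First I would set up the two standard families of automorphisms of $B\Gamma_n(\mathbb{F}_q;f_2,\dots,f_n)$ for these specific $f_i$. The \emph{translation-type} maps: for a point $(p)=(p_1,\dots)$ and a line $[l]=[l_1,\dots]$, adjacency is governed by relations $p_i+l_i = f_i(p_1,l_1,\dots,p_{i-1},l_{i-1})$ with $f_i$ built from the monomials $p_1 l_1, p_1 l_2,\dots$ (or $p_1 l_1^{i-1}$ in the alternative representation). For fixed $a\in\mathbb{F}_q$ one checks that the map sending $l_1\mapsto l_1+a$ (and correspondingly adjusting $p_i,l_i$ for $i\ge2$ so that all the defining equations are preserved) is a graph automorphism; likewise the map adjusting by translating $p_1$. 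Composing these one sees the automorphism group is transitive on the first coordinates of points and lines. Then the \emph{dilation-type} maps: $p_1\mapsto tp_1$, $l_1\mapsto t^{-1}l_1$ with compensating scalings on higher coordinates (here the homogeneity of $p_1 l_1^{i-1}$ makes this transparent) give automorphisms fixing $0$ in the first coordinate and scaling. Finally, there are automorphisms translating the higher coordinates $p_i\mapsto p_i+c_i$, $l_i\mapsto l_i - c_i$ for $i\ge2$, which are manifestly automorphisms because only the sum $p_i+l_i$ appears on the left of the defining equations while the right-hand side involves only lower-indexed coordinates. Combining these three types shows transitivity on $P$ and, separately, transitivity on the lines incident to any fixed point, which gives edge-transitivity.

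For $\Lie(M_1,3,q)$ (one relation $p_2+l_2=p_1l_1$) and $\Lie(M_2,4,q)$ (two relations $p_2+l_2=p_1l_1$, $p_3+l_3=p_1l_2$) this is routine: the group generated by translations in $l_1$, dilations, and the additive translations in the higher coordinates already acts transitively on flags, and one writes down the explicit formulas and verifies each preserves every defining equation by direct substitution. The main obstacle is the third graph $\Lie(M_3,6,q)$, whose last relation $p_5+l_5 = p_2 l_3 - 2p_3 l_2 + p_4 l_1$ is a genuine bilinear (not monomial) expression in the intermediate coordinates; the compensating transformation on $p_5,l_5$ under an $l_1$-translation or a dilation is no longer a simple shift but mixes several coordinates, and one must check that the alternating-sign combination $p_2 l_3 - 2 p_3 l_2 + p_4 l_1$ transforms in a way that can be absorbed. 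This is exactly where the Lie-algebra origin of the graph is used: the relation comes from a bracket in $\Lie(M_3,6,q)$, and the adjoint action of the relevant one-parameter subgroups (unipotent and toral) of the corresponding group preserves it; so I would either cite the computation in \cite{TAT12} establishing that these Lie graphs are acted on edge-transitively by the associated group, or carry out the finite explicit verification that the candidate maps (translations $l_1\mapsto l_1+a$ with the induced polynomial corrections on $p_2,\dots,p_5,l_2,\dots,l_5$ dictated by the equations, plus dilations, plus higher translations) satisfy all four defining relations. Once transitivity on flags is in hand for all three graphs, edge-transitivity is immediate, completing the proof of Lemma~\ref{L is et}.
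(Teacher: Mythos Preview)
The paper does not supply its own proof of this lemma; it is stated as a citation of \cite[Theorem~3.1(c)]{LFU93} and used as a black box. So there is no in-paper argument to compare against.

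That said, your sketch is a correct outline of the standard proof, and it is essentially the same strategy the paper \emph{does} carry out in detail for the analogous Lemma~\ref{Lm is edge-trans} on linearized Wenger graphs: exhibit coordinate-translation automorphisms $\sigma_{i,x}$ (shift $l_1$, shift $p_1$, shift each higher $p_i,l_i$ by $\pm x$) and compose them to carry an arbitrary flag to $(0)\sim[0]$. Two minor remarks. First, the dilation maps you introduce are unnecessary for edge-transitivity; translations alone already act transitively on flags, exactly as in the proof of Lemma~\ref{Lm is edge-trans}. Second, you correctly isolate $\Lie(M_3,6,q)$ as the only case requiring real work, since the relation $p_5+l_5=p_2l_3-2p_3l_2+p_4l_1$ forces the compensating shifts on $p_5,l_5$ to be genuine polynomials in the lower coordinates rather than simple translations; this verification is precisely what \cite{LFU93} (and the Lie-theoretic framework of \cite{TAT12}) provide, and your proposal to either cite it or carry out the explicit check is appropriate.
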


\begin{lemma}\cite[Theorem 2]{LFV02}\label{W is et}
The Wenger graph $W_n(q)$ is edge-transitive for all $n$ and $q$.
\end{lemma}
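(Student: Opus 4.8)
The plan is to follow the standard strategy for showing algebraically defined graphs are edge-transitive: exhibit enough graph automorphisms of $W_n(q)$ acting transitively on edges. Since $W_n(q) = B\Gamma_{n+1}(\mathbb{F}_q; p_1l_1, \dots, p_1l_1^n)$ is bipartite with parts $P_{n+1}$ and $L_{n+1}$ (each a copy of $\mathbb{F}_q^{n+1}$), and the map swapping a point $(p_1,\dots,p_{n+1})$ with the line $[p_1,\dots,p_{n+1}]$ is an automorphism (the defining relations $p_i+l_i = p_1 l_1^{i-1}$ are symmetric in points and lines up to renaming), it suffices to find automorphisms acting transitively on the edge set in a way that, together with this swap, reaches every edge; concretely it is enough to show the stabilizer of a fixed point $(0,\dots,0)$ (or the point-part together with a translation subgroup) already gives transitivity on the edges through that vertex, and then use vertex-transitivity on $P_{n+1}$ to move any point to $(0,\dots,0)$.

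First I would write down the translation automorphisms. For a fixed line $[b] = [b_1,\dots,b_{n+1}]$, the map sending $(p) \mapsto (p_1 + b_1, p_2 + c_2, \dots, p_{n+1} + c_{n+1})$ and $[l] \mapsto [l_1, l_2 + d_2, \dots, l_{n+1} + d_{n+1}]$ for suitably chosen constants $c_i, d_i$ depending on $b$ preserves adjacency: one checks the new coordinates still satisfy $p_i + l_i = p_1 l_1^{i-1}$ by expanding $(p_1+b_1) l_1^{i-1}$ and matching terms degree by degree in $l_1$; this forces $c_i$ and $d_i$ to be certain polynomial expressions in $b_1$ (and free parameters), which one solves recursively in $i$. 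Such translations act transitively on the point-part $P_{n+1}$, so every point is equivalent to the origin. Next I would analyze the stabilizer of the origin point $(0,\dots,0)$: its neighbours are exactly the lines $[l]$ with $l_i = 0$ for $i \ge 2$, i.e. $[l_1, 0,\dots,0]$ for $l_1 \in \mathbb{F}_q$, and I would produce "dilation" automorphisms $(p) \mapsto (\alpha p_1, \beta_2 p_2, \dots)$, $[l] \mapsto (\alpha^{-1} l_1, \dots)$ with $\alpha \in \mathbb{F}_q^\times$ mapping $[1,0,\dots,0]$ to $[\alpha^{-1},0,\dots,0]$, together with the translations above to reach the neighbour $[0,0,\dots,0]$ as well. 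This shows the automorphism group is transitive on edges through the origin, hence (with vertex-transitivity on $P_{n+1}$ and the point-line swap) transitive on all edges.

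The main obstacle is purely bookkeeping: verifying that the candidate affine maps — the translations with their forced correction terms $c_i(b_1,\dots)$, $d_i(b_1,\dots)$, and the dilations with their exponents on $\beta_i$ — actually preserve every one of the $n$ defining relations simultaneously, since the relation for index $i$ couples $p_1, l_1$ and all lower coordinates. I expect this to come down to an induction on $i$: assuming the maps preserve relations $2,\dots,i-1$, expand $p_1 l_1^{i-1}$ under the substitution and read off the unique choice of $c_i, d_i$ (resp. the exponent constraints) that makes relation $i$ hold; the polynomial identity $(\alpha p_1)(\alpha^{-1} l_1)^{i-1} = \alpha^{2-i} (p_1 l_1^{i-1})$ makes the dilation case transparent, and the binomial expansion of $(p_1+b_1) l_1^{i-1}$ handles the translation case. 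Since Lemma~\ref{W is et} is quoted from \cite{LFV02}, I would in practice just cite that reference for the detailed computation, but the sketch above is the route one takes to prove it from scratch.
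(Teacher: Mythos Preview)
The paper gives no proof of this lemma at all --- it is simply quoted as \cite[Theorem~2]{LFV02}. Your sketch is in the right spirit and essentially matches the argument the paper \emph{does} write out for the analogous statement about $L_m(q)$ (Lemma~\ref{Lm is edge-trans}): exhibit explicit affine automorphisms $\sigma_{i,x}$ that move any edge to the edge $(0,\dots,0)\sim[0,\dots,0]$.

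One correction, though: your parenthetical claim that the point--line swap $(p_1,\dots,p_{n+1}) \leftrightarrow [p_1,\dots,p_{n+1}]$ is an automorphism is false. The relation $p_i+l_i=p_1l_1^{\,i-1}$ is \emph{not} symmetric under this swap once $i\ge 3$ (it would become $p_i+l_i=l_1p_1^{\,i-1}$). In fact the paper itself recalls in the introduction that $W_n(q)$ is semisymmetric --- edge-transitive but \emph{not} vertex-transitive --- for $n\ge 3$, $q\ge 3$ or $n=2$, $q$ odd, so no such swap automorphism can exist in those cases. Fortunately your concrete plan never actually uses the swap: transitivity on the point-part together with transitivity of the point-stabilizer on the edges through $(0,\dots,0)$ already gives edge-transitivity, and that is exactly what you outline. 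You can also drop the dilations entirely: the translation that shifts $l_1\mapsto l_1+x$ (with the induced corrections on the $p_i$, all of which vanish at $p_1=0$) fixes the origin point and is already transitive on its neighbours $[l_1,0,\dots,0]$; this is precisely the role of $\sigma_{1,x}$ in the paper's proof of Lemma~\ref{Lm is edge-trans}.
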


Note that the girth of $W_n(q)$ is $8$ for $n\ge 2$, and the girth of $W_1(q)$ is $6$ for $q\ge 3$, see \cite{LFU93,LTW18}.
Therefore, the order, degree, girth of $\Lie(M_1,3,q)$ and $\Lie(M_2,4,q)$ are already determined, and it suffices to describe the number of girth cycles with a certain edge in graphs $\Lie(M_1,3,q)$ and $\Lie(M_2,4,q)$, which is  completely described in the following theorem.

\begin{theorem}\label{f.d Lie graph egr}
Let $q\ge 3$ be a prime power. Then
\begin{enumerate}
\item $\Lie(M_1,3,q)$ is an $egr(2q^2, q,6,(q-1)^2(q-2))$-graph;
\item $\Lie(M_2,4,q)$ is an $egr(2q^3, q,8,(q-1)^3(q-2))$-graph for odd $q$;
\item $\Lie(M_2,4,q)$ is an $egr(2q^3, q,8,(q-1)^3(q-3)+2(q-1)^2)$-graph for even $q$.
\end{enumerate}
\end{theorem}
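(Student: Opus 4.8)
The plan is to compute, for the edge $(0)\sim[0]$ (where $(0)$ and $[0]$ denote the all-zero point and line), exactly how many $g$-cycles pass through it, using edge-transitivity (Lemma \ref{L is et}) to reduce to this single representative edge. A $g$-cycle through $(0)\sim[0]$ in the bipartite graph $B\Gamma_n$ corresponds to an alternating sequence of points and lines $(0),[0],(p^{(1)}),[l^{(1)}],\dots$ returning to $(0)$, and by the unique-neighbour property noted after Definition \ref{B Gamma n graph by equ}, such a cycle is determined by the sequence of first coordinates of the intermediate vertices. So for $\Lie(M_1,3,q)$ (girth $6$) I would parametrise a $6$-cycle $(0)\sim[0]\sim(p)\sim[l]\sim(p')\sim[l']\sim(0)$ by the first coordinates $a=l_1', b=p_1, c=l_1'$ wait—more carefully, by the three "new" first coordinates one meets along the way, solve the resulting linear/bilinear system coming from the defining relation $p_2+l_2=p_1l_1$, and count the solutions for which the walk is actually a cycle (all six vertices distinct, i.e. the walk does not backtrack). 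The non-backtracking conditions translate into the three free first coordinates being pairwise distinct and distinct from $0$, which after bookkeeping yields $(q-1)(q-2)$ choices at the relevant stages, giving $\lambda=(q-1)^2(q-2)$; this settles (1).

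For part (2), $\Lie(M_2,4,q)=W_2(q)$ with relations $p_2+l_2=p_1l_1$, $p_3+l_3=p_1l_1^2$ (using the equation representation in the Remark), an $8$-cycle through $(0)\sim[0]$ is an alternating walk on $8$ vertices, determined by four intermediate first coordinates, say $x_1,x_2,x_3,x_4$, which must all be nonzero and satisfy the non-backtracking conditions together with two "closure" equations obtained by summing the relations around the cycle. The key point is that the closure equations are: a linear relation $x_1-x_2+x_3-x_4=0$ (from the degree-one relation) and a relation of the form $x_1(\cdot)-x_2(\cdot)+\cdots=0$ quadratic in the $x_i$ (from $p_1l_1^2$), which after substituting $x_4=x_1-x_2+x_3$ becomes a single polynomial condition on $(x_1,x_2,x_3)$. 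I would then count the triples $(x_1,x_2,x_3)$ with all four of $x_1,x_2,x_3,x_4$ nonzero and satisfying the non-backtracking constraints (consecutive first coordinates along the walk must differ) that lie on this hypersurface. For $q$ odd the quadratic factors nicely (its discriminant is a square, or it splits as a product of linear forms over $\mathbb{F}_q$), so the count is clean and gives $(q-1)^3(q-2)$; for $q$ even characteristic-$2$ phenomena make one of the quadratic terms degenerate (squaring is additive), changing the count to $(q-1)^3(q-3)+2(q-1)^2$. This is where I would be most careful, and it is the main obstacle: correctly separating the odd and even cases and making sure every non-backtracking inequality is accounted for without double-counting, since a miscount of even a lower-order term changes $\lambda$.

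Concretely, I would set up the $8$-cycle as $(0)\sim[0]\sim(P^1)\sim[L^1]\sim(P^2)\sim[L^2]\sim(P^3)\sim[L^3]\sim(0)$, write $a=L^1_1$, $b=P^2_1$, $c=L^2_1$, $d=P^3_1$ for the free first coordinates (note $P^1_1=0$ is forced since $(P^1)$ is adjacent to $[0]$ and $(0)$ is adjacent to $[0]$... actually $P^1_1$ is free too—I'd recheck which first coordinates are forced by adjacency to $(0)$ or $[0]$). Adjacency to a zero vertex forces the corresponding equations to read $p_i = \text{(something in the other vertex's coordinates)}$ or $l_i=0$, which pins down several coordinates immediately; tracking this carefully reduces the genuine degrees of freedom to the expected $\tfrac{g-2}{2}$ first coordinates. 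Summing the two defining relations alternately around the $8$-cycle and using the pinned coordinates gives the two closure equations; the non-backtracking conditions are precisely that each newly-visited vertex differs from the one two steps back, which at the level of first coordinates becomes a list of $\le 6$ inequalities among $\{0,a,b,c,d\}$. The counting then reduces to: number of $(a,b,c,d)\in(\mathbb{F}_q^\times)^{\text{(some)}}$ on the variety cut out by the two closure equations, satisfying the inequalities. I expect the linear closure equation to eliminate one variable for free, and the remaining problem to be counting points on a conic (for $q$ odd) or a parabola-type curve (for $q$ even) minus the "bad" points where inequalities fail, which is a finite and explicit—if delicate—computation. I would present the odd and even subcounts as separate displayed computations to keep the characteristic-$2$ bookkeeping transparent, and finally invoke edge-transitivity once more to conclude the count is the same for every edge, establishing (2) and (3).
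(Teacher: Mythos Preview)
Your high-level strategy is exactly right and matches the paper: invoke edge-transitivity (Lemma~\ref{L is et}) to reduce to the single edge $(0)\sim[0]$, parametrise a $g$-walk through this edge by the first coordinates of the intermediate vertices (each subsequent vertex's higher coordinates are forced by adjacency), impose closure, and count. Part~(1) goes through essentially as you sketch: the paper parametrises the $6$-walk by three first coordinates $x,y,x'$ (the fourth, $t$, is determined by the single closure equation $x't=(x'-x)y$), and the non-backtracking constraints give $(q-1)^2(q-2)$.

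For parts (2)--(3), however, your setup is off in ways that would block the plan as written. An $8$-walk through $(0)\sim[0]$ has \emph{six} intermediate vertices, hence six free first coordinates, not four. In the paper's notation these are $x,y,x',y',x'',t$: neither $P^1_1$ nor $L^3_1$ is forced to zero by adjacency to $[0]$ or $(0)$ (adjacency to a zero vertex only forces the \emph{higher} coordinates of the neighbour to vanish). Closing the walk yields the two equations
\[
x''(y'-t)=x'y'-(x'-x)y,\qquad x''(y'-t)(y'+t)=x'{y'}^2-(x'-x)y^2,
\]
both bilinear in the point- and line-variables; there is no linear relation of the shape $x_1-x_2+x_3-x_4=0$, so the conic-counting picture you anticipate does not materialise. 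With only four free parameters and two such equations your count would be on the order of $q^2$, not the required $q^4$.

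What the paper actually does is treat $x,y,x',y'$ as the four genuinely free parameters and note that $t,x''$ are then uniquely determined by the two closure equations whenever the system is consistent with $t\neq y'$. The count splits into four cases according to whether $x'=0$ and whether $y'=0$ (the key observation being $y'=0\iff t=y$). In each case the non-backtracking inequalities become explicit constraints on $(x,y,x',y')$ that one counts directly; summing the four subcounts gives $(q-1)^3(q-2)$ for $q$ odd. For $q$ even only the two sub-cases with $y'\neq 0$ change: the constraint $y'\neq y/2$ disappears (since $2=0$) while a new constraint $y'\neq b^{2^{e-1}}y$ enters from $y'\neq -t=t$, shifting the total to $(q-1)^3(q-3)+2(q-1)^2$. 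This four-way case split on $(x',y')$ is the organising device you are missing; once the parameter count is corrected and the closure equations are actually written down, the argument is an explicit finite tally rather than a geometric point-count.
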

\begin{proof}
(1) By Lemma \ref{L is et}, the graph $\Lie(M_2,4,q)$ is edge-transitive, so it is sufficient to show that edge $(0,0)\sim [0,0]$ is contained in exactly $(q-1)^2(q-2)$ distinct $6$-cycles. By the adjacency relations of $\Lie(M_1,3,q)$, there is a closed walk $W_6$ as follows:
$$
(0,0)\sim[0,0]\sim(x,0)\sim[y,xy] \sim
(x',(x'-x)y)\sim[t,0]\sim(0,0),
$$
where $x,y,x',t \in \mathbb{F}_q$.
Since the girth $g(\Lie(M_2,4,q))=6$, $W_6$ is a cycle if and only if $0\neq x\neq x' \neq 0$ and $0\neq y\neq t \neq 0$.
Since the point $(x',(x'-x)y)$ and the line $[t,0]$ in the above closed walk are adjacent, we have $x't=(x'-x)y$.
It follows that $t$ has a solution $\frac{(x'-x)y}{x'}$ and is uniquely determined by $x$, $y$, $x'$.
Note that the number of possible values of $x$, $y$ and $x'$ are $q-1$, $q-1$ and $q-2$, respectively. So the number of different $6$-cycles containing edge $(0,0)\sim [0,0]$ is $(q-1)^2(q-2)$.

(2) By Lemma \ref{L is et}, the graph $\Lie(M_2,4,q)$ is edge-transitive, so we only need to show that edge $(0,0,0)\sim [0,0,0]$ is contained in exactly $(q-1)^3(q-2)$ distinct $8$-cycles. By the adjacency relations of $\Lie(M_2,4,q) \cong B\Gamma_3(\mathbb{F}_q;p_1l_1,p_1l_1^2))$, there is a closed walk $W_8$ as follows:
\begin{align}
&(0,0,0)\sim[0,0,0]\sim(x,0,0)\sim[y,xy,xy^2] \sim(x',(x'-x)y,(x'-x)y^2)\sim \nonumber\\
&[y',x'y'-(x'-x)y,x'y'^2-(x'-x)y^2]\sim(x'',x''t,x''t^2)\sim [t,0,0]\sim(0,0,0),\nonumber
\end{align}
where $x,y,x',y',x'',t \in \mathbb{F}_q$.
Since the point $(x'',x''t,x''t^2)$ and the line $[y',x'y'-(x'-x)y,x'y'^2-(x'-y)xy]$ in the above closed walk are adjacent, we have
\begin{align}
x''y'&=x''t+x'y'-(x'-x)y \nonumber\\
x''y'^2&=x''t^2+x'y'^2-(x'-x)y^2 \nonumber
\end{align}
It is clear that the above equations are equivalent to the following equations.
\begin{align}
x''(y'-t)&=x'y'-(x'-x)y   \label{eq1 of 8-cycle}\\
x''(y'-t)(y'+t)&=x'y'^2-(x'-x)y^2 \label{eq2 of 8-cycle}
\end{align}
Since the girth $g(\Lie(M_2,4,q))=8$, $W_8$ is a cycle if and only if $0\neq x\neq x' \neq x'' \neq 0$ and $0\neq y\neq y' \neq t \neq 0$. Then $t$ and $x''$ have solutions and are uniquely determined by the values of $x$, $y$, $x'$ and $y'$.


Now we count the number of different $8$-cycles containing edge $(0,0,0)\sim [0,0,0]$. Note that  $y=t$ if and only if $y'=0$, by equations \eqref{eq1 of 8-cycle} and \eqref{eq2 of 8-cycle}. We divide the discussion into the following four cases.

{\bf Case 1.} $x'=0$ and $y'=0$. By equations \eqref{eq1 of 8-cycle} and \eqref{eq2 of 8-cycle}, we get $t=y$ and $x''=-x$.
The number of possible values of $x$, $y$ and $y'$ are $q-1$, $q-1$ and $1$, respectively. So the number of different $8$-cycles containing edge $(0,0,0)\sim [0,0,0]$ in this case is $(q-1)^2$.

{\bf Case 2.} $x'=0$ and $y'\neq0$. By equations \eqref{eq1 of 8-cycle} and \eqref{eq2 of 8-cycle}, it follows that $t=y-y'$ and $x''(2y'-y)=xy$.
Since $y\neq 0$ and $x\neq 0$, we have $y' \neq y, \frac{y}{2}, 0$ and $y\neq \frac{y}{2} \neq 0$. The number of possible values of $x$, $y$ and $y'$ are $q-1$, $q-1$ and $q-3$, respectively. So the number of different $8$-cycles containing edge $(0,0,0)\sim [0,0,0]$ is $(q-1)^2(q-3)$.

{\bf Case 3.} $x'\neq 0$ and $y'=0$. Then $y=t$ and $x''=x'-x$, we count the number of possible values of $x$, $y$, $x'$ and $y'$, which are $q-1$, $q-1$, $q-2$ and $1$, respectively. Then the number of different $8$-cycles containing edge $(0,0,0)\sim [0,0,0]$ is $(q-1)^2(q-2)$.

{\bf Case 4.} $x'\neq 0$ and $y'\neq 0$. Then $t\neq y$. By equations \eqref{eq1 of 8-cycle} and \eqref{eq2 of 8-cycle},
$$\left(x'y'-(x'-x)y\right)(y'+t)=x'y'^2-(x'-x)y^2.$$
Set $a=x'y'-(x'-x)y$ and $b=\frac{x'-x}{x'}$. Then $y'\neq by$ since $a\neq 0$. It follows that $t$ and $x''$ have solutions and are uniquely determined by $x$, $y$, $x'$ and $y'$. Moreover, $b\neq 1,0$ since $x\neq 0$ and $x'\neq x$. Then $y'\neq y,by,0$ and $0\neq y\neq by \neq 0$. Now, we count the number of possible values of $x$, $y$, $x'$ and $y'$, which are $q-1$, $q-1$, $q-2$ and $q-3$, respectively. Then the number of different $8$-cycles containing edge $(0,0,0)\sim [0,0,0]$ is $(q-1)^2(q-2)(q-3)$ in this case.

By the above four cases, there are exactly $(q-1)^2+(q-1)^2(q-3)+(q-1)^2(q-2)^2=(q-1)^3(q-2)$ $8$-cycles containing edge $(0,0,0)\sim [0,0,0]$.

(3) Suppose that $q=2^e$ and $e>1$. The counting is similar to the discussion in the proof of (2) as above with slight differences in Case 2 and Case 4. More precisely, for Case 2 ($x'=0$ and $y'\neq0$),  it follows that $x''=x$ and $y'\neq y,0$ since $2y'=0$, and the number of different $8$-cycles containing edge $(0,0,0)\sim [0,0,0]$ becomes $(q-1)^2(q-2)$.  While for the case 4 ($x'\neq 0$ and $y'\neq 0$), $x'y'^2-(x'-x)y^2\neq 0$ since $y'\neq -t=t$.
Thus,
$$y'^2\neq by^2=(b^{2^{e-1}}y)^2 \quad \text{and} \quad y'\neq b^{2^{e-1}}y.$$
Note that $yb\neq b^{2^{e-1}}y$. Then the number of different $8$-cycles containing edge $(0,0,0)\sim [0,0,0]$ is $(q-1)^2(q-2)(q-4)$. So the number of $8$-cycles containing edge $(0,0,0)\sim [0,0,0]$ is exactly $(q-1)^2+(q-1)^2(q-2)+(q-1)^2(q-2)+(q-1)^2(q-2)(q-4)=(q-1)^3(q-3)+2(q-1)^2$.
\end{proof}

Now we are ready to prove our first main theorem.
\begin{theorem}\label{n of cycles in W}
Let $q$ be a prime power and $n$ be a positive integer. Then Wenger graph $W_n(q)$ is an $egr(2q^{n+1}, q,g_n,\lambda_n)$-graph for all $n$ and $q$. More precisely,
\begin{enumerate}
\item $W_1(q)$ is an $egr(2q^2, q,6,(q-1)^2(q-2))$-graph for $q\ge 3$, and $W_1(2)$ is an $egr(8, 2,8,1)$-graph;
\item the girth $g_2$ of $W_2(q)$ is $8$, $\lambda_2 = (q-1)^3(q-2)$ for $q$ odd, and $\lambda_2 = (q-1)^3(q-3)+2(q-1)^2$ for $q$ even;
\item for any $q$, the girth $g_n$ of $W_n(q)$ is $8$ and $\lambda_n = (q-1)^3$ when $n\ge 3$.
\end{enumerate}
\end{theorem}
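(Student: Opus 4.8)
Parts (1) for $q\ge3$ and (2) require no new argument: since $\Lie(M_1,3,q)=W_1(q)$ and $\Lie(M_2,4,q)=W_2(q)$, Theorem~\ref{f.d Lie graph egr} already supplies the asserted $v,k,g,\lambda$. For $W_1(2)$, observe that it is $2$-regular and bipartite on $2\cdot2^2=8$ vertices, so it is either $C_8$ or $C_4\sqcup C_4$; since $W_1(q)$ has no $4$-cycle (immediate from the relation $p_2+l_2=p_1l_1$, or \cite{LFU93}) it must be $C_8$, which is an $egr(8,2,8,1)$-graph.

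For $n\ge3$ the plan is to imitate the proof of Theorem~\ref{f.d Lie graph egr}(2). By Lemma~\ref{W is et} the graph $W_n(q)$ is edge-transitive, so it suffices to count the $8$-cycles through the edge $(0,\dots,0)\sim[0,\dots,0]$ in the representation $B\Gamma_{n+1}(\mathbb{F}_q;p_1l_1,p_1l_1^2,\dots,p_1l_1^n)$ of $W_n(q)$ (cf.\ \cite{CLL14}); the girth equals $8$ for $n\ge2$ by \cite{LFU93,LTW18}. Reading off the adjacency relations one after another, every closed walk of length $8$ beginning with this edge is of the form
\begin{align*}
&(0,\dots,0)\sim[0,\dots,0]\sim(x,0,\dots,0)\sim[y,xy,\dots,xy^n]\sim\big(x',(x'-x)y,\dots,(x'-x)y^n\big)\\
&\qquad\sim\big[y',x'y'-(x'-x)y,\dots,x'{y'}^{n}-(x'-x)y^{n}\big]\sim\big(x'',x''t,\dots,x''t^n\big)\sim[t,0,\dots,0]\sim(0,\dots,0)
\end{align*}
for suitable $x,y,x',y',x'',t\in\mathbb{F}_q$; and since the point $(x'',x''t,\dots,x''t^n)$ and the line $\big[y',x'y'-(x'-x)y,\dots\big]$ are adjacent, the system
\[
(x''-x')\,{y'}^{j}+(x'-x)\,y^{j}=x''\,t^{j},\qquad j=1,\dots,n
\]
holds. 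As in Theorem~\ref{f.d Lie graph egr}, given this system the walk is a genuine $8$-cycle precisely when $x\ne0\ne x''$, $x\ne x'\ne x''$, $y\ne0\ne t$ and $y\ne y'\ne t$.

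The feature distinguishing $n\ge3$ from $n=2$ is that the system then contains at least the three equations $j=1,2,3$; these assert that $(x''-x',\,x'-x,\,-x'')$ is annihilated by the $3\times3$ matrix with rows $({y'}^{j},y^{j},t^{j})$, $j=1,2,3$, whose determinant is $y\,y'\,t\,(y-y')(t-y')(t-y)$ — a Vandermonde determinant scaled by $y\,y'\,t$. Were $y,y',t$ pairwise distinct and all nonzero, this matrix would be invertible and force $x''=0$, contradicting $x''\ne0$. Hence, under the inequalities above, $y'=0$ or $y=t$; substituting either alternative into the equations $j=1,2$ and using $x''\ne0\ne x''-x'$ forces $y'=0$, $t=y$ and $x''=x'-x$, after which the equations $j=3,\dots,n$ read $(x'-x)y^{j}=x''y^{j}$ and hold automatically. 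Conversely, for any $x\in\mathbb{F}_q^{\times}$, $x'\in\mathbb{F}_q\setminus\{x\}$ and $y\in\mathbb{F}_q^{\times}$, the choices $t=y$, $y'=0$ and $x''=x'-x$ yield a walk whose four points and four lines are easily checked to be pairwise distinct, hence a genuine $8$-cycle. As distinct triples $(x,x',y)$ give distinct cycles, counting the $q-1$ choices each for $x$, $x'$ and $y$ yields $\lambda_n=(q-1)^3$, proving part (3).

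The main obstacle is the bookkeeping rather than the algebra: one must pin down exactly which inequalities characterize a genuine $8$-cycle — several of them, such as $y'\ne t$ and $x'\ne x''$, turn out to be forced by the system rather than being independent hypotheses — and then carry out the converse distinctness check on the eight vertices under $x\ne0$, $x'\ne x$, $y\ne0$. The conceptual content is the single Vandermonde observation above, which is exactly what collapses the count from the quartic-in-$q$ value of $W_2(q)$ to $(q-1)^3$ once $n\ge3$.
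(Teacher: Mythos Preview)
Your proof is correct and follows essentially the same approach as the paper: reduce parts (1) and (2) to Theorem~\ref{f.d Lie graph egr} via the identifications $W_1(q)=\Lie(M_1,3,q)$ and $W_2(q)=\Lie(M_2,4,q)$, and for part (3) use edge-transitivity to fix the edge $(0,\dots,0)\sim[0,\dots,0]$, parametrize the closed $8$-walks, and apply the Vandermonde argument on three consecutive equations to force $y'=0$, $t=y$, $x''=x'-x$. The only cosmetic differences are that the paper organizes the case split as $y'=0$ versus $y'\ne0$ (rather than your ``$y'=0$ or $y=t$''), and handles $W_1(2)$ by the blanket remark that $W_n(2)$ is $2$-regular hence an $egr(2^{n+2},2,8,1)$-graph for all $n\ge1$.
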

\begin{proof}
The Wenger graph $W_n(q)$ is edge-girth-regular graph by Lemma \ref{W is et}. Moreover, the girth of $W_n(q)$ is $8$ for $n\ge 2$, and the girth of $W_1(q)$ is $6$ for $q\ge 3$, see \cite{LFU93,LTW18}. By a straightforward check, $W_n(2)$ is an $egr(2^{n+2}, 2,8,1)$-graph since $W_n(2)$ is $2$-regular for all $n\ge 1$. Since the Wenger graphs $W_1(q)=\Lie(M_1,3,q)$ and  $W_2(q)=\Lie(M_2,4,q)$, the first statement follows directly by Theorem \ref{f.d Lie graph egr}.

For the rest two statements, it suffices to prove that $\lambda_n =(q-1)^3$ for $n\ge 3$. We consider an $8$-cycle of the following form
\begin{align}
&(0,0,0,\ldots)\sim[0,0,0,\ldots]\sim(x,0,0,\ldots)\sim[y,xy,xy^2,\ldots] \sim(x',(x'-x)y,(x'-x)y^2,\ldots)\sim \nonumber\\
&[y',x'y'-(x'-x)y,x'y'^2-(x'-x)y^2,\ldots]\sim(x'',x''t,x''t^2,\ldots)\sim [t,0,0,\ldots]\sim(0,0,0,\ldots).\nonumber
\end{align}
Then $x$, $y$, $x'$ $y'$, $x''$ and $t$ satisfy equations \eqref{eq1 of 8-cycle}, \eqref{eq2 of 8-cycle} and the following equations.
\begin{align}
x''y'^k=x''t^k+x'y'^k-(x'-x)y^k \quad \text{for} \quad 3\le k \le n. \label{eq3 of 8-cycle}
\end{align}

{\bf Case 1.} $y'=0$. Note that $y=t$ if and only if $y'=0$. Then $t=y$ and $x''=x'-x$ satisfy equations \eqref{eq1 of 8-cycle}, \eqref{eq2 of 8-cycle} and \eqref{eq3 of 8-cycle}. Now, we count the number of possible values of $x$, $y$, $x'$ and $y'$, which are $q-1$, $q-1$, $q-1$ and $1$, respectively. Then the number of different $8$-cycles containing edge $(0,0,0)\sim [0,0,0]$ is $(q-1)^3$.

{\bf Case 2.} $y'\neq 0$. We obtain $y\neq t$ and
	\begin{equation}\nonumber
		\left[ \begin{array}{ccc}
			y'^m & y^m & t^m \\
			y'^{m+1} & y^{m+1} & t^{m+1} \\
			y'^{m+2} & y^{m+2} & t^{m+2}
		\end{array}
		\right ]
		\left[ \begin{array}{ccc}
			x''-x' \\
			x'-x \\
			-x''
		\end{array}
		\right ]
		=
		\left[ \begin{array}{ccc}
			0 \\
			0 \\
			0
		\end{array}
		\right ],\;
	\end{equation}
where $m\le n-2$ is a positive integer. Since $0\neq y\neq y'$, $y'\neq t \neq 0$, and $t \neq y$, the matrix corresponding to the above linear equations is an invertible matrix, we obtain  $x''-x'=0$, which is impossible.

By the above two cases, the number of $8$-cycles containing edge $(0,\dots,0)\sim [0,\dots,0]$ in $W_n(q)$ is exactly $(q-1)^3$ .
Therefore, $W_n(q)$ is an $egr(2q^{n+1}, q,8,\lambda_n)$-graph with $\lambda_n = (q-1)^3$ for $n\ge 3$.
\end{proof}

The girth cycle of the algebraic bipartite graph $D(k,q)$ is characterized by equations in \cite{XCT23}, but the authors stated that the number of girth cycles in the algebraic bipartite graph $D(k,q)$ is still not transparent, even when $k=3$. Note that $D(3,q)=W_2(q)$.

Let $\Gamma$ be an edge-girth-regular graph $egr(v,k,g,\lambda)$. By Proposition 2.3 (iii) in \cite{JKM18}, the number of girth cycles of graph $\Gamma$ is $\frac{vk\lambda}{2g}$. Therefore, determining the parameter $\lambda$ is equivalent to determining the number of girth cycles of graph $\Gamma$.
As a corollary, we can obtain the number of girth cycles of Wenger graph for the case that $n=2$ and $q$ odd. In particular, the number of girth cycles of graph $D(3,q)=W_2(q)$ is determined. It is riveting that the number of girth cycles of $W_2(3)$ (the Gray graph) is $81$.



\begin{corollary} \label{the number of girth cycles of W_2(q)}
Let $q = p^e$ with $p$ an odd prime. Then
\begin{enumerate}
\item the number of girth cycles in $W_1(q)$ is equal to $\frac{vk\lambda}{2g}=\frac{q^3(q-1)^2(q-2)}{6}$;
\item the number of girth cycles in $W_2(q)$ is equal to $\frac{vk\lambda}{2g}=\frac{q^4(q-1)^3(q-2)}{8}$.
\end{enumerate}
\end{corollary}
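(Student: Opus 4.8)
The plan is to deduce this directly from Theorem~\ref{n of cycles in W} together with the counting identity for girth cycles in edge-girth-regular graphs recorded just above the statement. Recall from Proposition~2.3(iii) of \cite{JKM18} that an $egr(v,k,g,\lambda)$-graph contains exactly $\frac{vk\lambda}{2g}$ cycles of length $g$; this is obtained by double counting the incident pairs $(e,C)$ with $e$ an edge lying on a girth cycle $C$, using that the graph has $\frac{vk}{2}$ edges, each lying on $\lambda$ girth cycles, while each girth cycle accounts for $g$ such pairs.

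First I would note that the hypothesis $q=p^e$ with $p$ an odd prime forces $q\ge 3$ and $q$ odd, so we are precisely in the ranges covered by the ``odd $q$'' branches of Theorem~\ref{n of cycles in W}. Hence $W_1(q)$ is an $egr(2q^2,q,6,(q-1)^2(q-2))$-graph, and $W_2(q)$ is an $egr(2q^3,q,8,(q-1)^3(q-2))$-graph.

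Next I would substitute these parameters into the formula above. For $W_1(q)$, taking $v=2q^2$, $k=q$, $g=6$, $\lambda=(q-1)^2(q-2)$ yields
\[
\frac{vk\lambda}{2g}=\frac{2q^2\cdot q\cdot (q-1)^2(q-2)}{12}=\frac{q^3(q-1)^2(q-2)}{6}.
\]
For $W_2(q)$, taking $v=2q^3$, $k=q$, $g=8$, $\lambda=(q-1)^3(q-2)$ yields
\[
\frac{vk\lambda}{2g}=\frac{2q^3\cdot q\cdot (q-1)^3(q-2)}{16}=\frac{q^4(q-1)^3(q-2)}{8}.
\]
This gives both asserted values.

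There is essentially no obstacle in this argument: the corollary is a bookkeeping consequence of the main theorem and the standard girth-cycle count. The only point deserving a word of care is verifying that the correct parameter set is being used for the stated range of $q$ — in particular, that $q$ odd places us in the $\lambda_2=(q-1)^3(q-2)$ case of Theorem~\ref{n of cycles in W}(2) rather than the even-characteristic case, and that $q\ge 3$ so that Theorem~\ref{n of cycles in W}(1) applies — after which the two displayed computations close the proof.
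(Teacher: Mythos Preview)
Your proof is correct and matches the paper's approach exactly: the corollary is stated without an explicit proof, relying on the sentence preceding it which invokes Proposition~2.3(iii) of \cite{JKM18} together with Theorem~\ref{n of cycles in W}. Your substitution of the parameters and the brief check that odd $q\ge 3$ places you in the right cases of Theorem~\ref{n of cycles in W} is precisely what the paper intends.
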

We recall the generalized Tur\'an number $ex(n, H, \mathscr{F})$. Given a graph $H$ and a set of graphs $\mathscr{F}$, let $ex(n, H, \mathscr{F})$ denote the maximum possible number
of copies of $H$ in an $\mathscr{F}$-free graph on $n$ vertices. In particular, let $\mathscr{C}_{k}=\{C_3,C_4,\dots,C_k\}$, by the above corollary,
we will show that Wenger graphs give the tight bounds
on the generalized Tur\'an numbers $ex(2q^2, C_{6}, \mathscr{C}_{5})$ and $ex(2q^3, C_{8}, \mathscr{C}_{7})$.
\begin{corollary} \label{ex(n,C,C)}
Let $q = p^e$ with $p$ an odd prime. We have
\begin{enumerate}
\item $ex(2q^2, C_{6}, \mathscr{C}_{5}) \ge \frac{q^3(q-1)^2(q-2)}{6}$, and $ex(n,C_{6}, \mathscr{C}_{5})\ge\left(\frac{1}{48}-o(1)\right)n^3$;
\item $ex(2q^3, C_{8}, \mathscr{C}_{7}) \ge \frac{q^4(q-1)^3(q-2)}{8}$, and $ex(n,C_{8}, \mathscr{C}_{7})\ge \left(\frac{1}{2^{17/3}}-o(1)\right)n^{8/3}$.
\end{enumerate}
\end{corollary}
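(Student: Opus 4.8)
The plan is to take the Wenger graphs $W_1(q)$ and $W_2(q)$ themselves as the extremal $\mathscr{C}_5$-free (resp.\ $\mathscr{C}_7$-free) host graphs, read off the number of their girth cycles from Corollary~\ref{the number of girth cycles of W_2(q)}, and then interpolate from the special orders $2q^2$ and $2q^3$ to an arbitrary $n$ using the density of the primes.

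First, both $W_1(q)$ and $W_2(q)$ are bipartite, hence contain no cycle of odd length; and by Theorem~\ref{n of cycles in W} (see also \cite{LFU93,LTW18}) the girth of $W_1(q)$ is $6$ for $q\ge 3$ and the girth of $W_2(q)$ is $8$. Therefore $W_1(q)$ contains no $C_3,C_4$ or $C_5$, i.e.\ it is $\mathscr{C}_5$-free, while $W_2(q)$ contains no $C_3,\dots,C_7$, i.e.\ it is $\mathscr{C}_7$-free. Since $W_1(q)$ has $2q^2$ vertices and, by Corollary~\ref{the number of girth cycles of W_2(q)}(1), contains exactly $\tfrac{1}{6}q^3(q-1)^2(q-2)$ copies of $C_6$, the definition of the generalized Tur\'an number yields $ex(2q^2,C_6,\mathscr{C}_5)\ge \tfrac{1}{6}q^3(q-1)^2(q-2)$; likewise $W_2(q)$ has $2q^3$ vertices and, by Corollary~\ref{the number of girth cycles of W_2(q)}(2), contains exactly $\tfrac{1}{8}q^4(q-1)^3(q-2)$ copies of $C_8$, which gives the second exact bound.

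For the asymptotic statements I would use the elementary monotonicity $ex(n,H,\mathscr{F})\ge ex(n',H,\mathscr{F})$ for $n\ge n'$, valid here because adjoining $n-n'$ isolated vertices to an $\mathscr{F}$-free graph keeps it $\mathscr{F}$-free (all graphs involved are cycles, which cannot meet a vertex of degree $0$) and does not change the number of copies of $H$. Given $n$, let $q$ be the largest odd prime with $2q^2\le n$ in part~(1), resp.\ $2q^3\le n$ in part~(2). By the prime number theorem the ratio of consecutive primes tends to $1$, so the next prime after $q$ is $(1+o(1))q$, whence $2q^2=(1+o(1))n$, resp.\ $2q^3=(1+o(1))n$; in particular $q\to\infty$ as $n\to\infty$. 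Then
\[
ex(n,C_6,\mathscr{C}_5)\ \ge\ \tfrac{1}{6}q^3(q-1)^2(q-2)\ =\ \bigl(\tfrac{1}{6}-o(1)\bigr)q^6\ =\ \bigl(\tfrac{1}{6}-o(1)\bigr)\bigl(\tfrac{n}{2}\bigr)^3\ =\ \bigl(\tfrac{1}{48}-o(1)\bigr)n^3,
\]
and likewise
\[
ex(n,C_8,\mathscr{C}_7)\ \ge\ \tfrac{1}{8}q^4(q-1)^3(q-2)\ =\ \bigl(\tfrac{1}{8}-o(1)\bigr)q^8\ =\ \bigl(\tfrac{1}{8}-o(1)\bigr)\bigl(\tfrac{n}{2}\bigr)^{8/3}\ =\ \bigl(\tfrac{1}{2^{17/3}}-o(1)\bigr)n^{8/3},
\]
using $3+\tfrac{8}{3}=\tfrac{17}{3}$.

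There is no genuine difficulty in this argument; the one point worth care is the interpolation step, which relies only on gaps between primes (namely $p_{k+1}/p_k\to 1$) and not on anything about the graphs — one must restrict to odd primes so that Corollary~\ref{the number of girth cycles of W_2(q)} applies, and must verify that the slack between $2q^2$ (resp.\ $2q^3$) and $n$ is absorbed into the $o(1)$ term.
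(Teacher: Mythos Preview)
Your argument is correct and follows essentially the same route as the paper: both use the Wenger graphs $W_1(q)$, $W_2(q)$ as the host graphs, invoke Corollary~\ref{the number of girth cycles of W_2(q)} for the exact counts, and pass to general $n$ via monotonicity and the prime number theorem. Your version is in fact more carefully justified --- you spell out why $W_i(q)$ is $\mathscr{C}_{2i+3}$-free and why monotonicity holds via isolated vertices --- whereas the paper leaves these implicit.
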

\begin{proof}
By the prime number theorem, then the largest prime below $n$ has size $n-o(n)$. If $p$ is the largest prime such that $2p^2\le n$, then $p=\left(\frac{1}{2^{1/2}}-o(1)\right)n^{1/2}$. Hence,
$$ex(n,C_{6}, \mathscr{C}_{5})\ge ex(2p^2, C_{6}, \mathscr{C}_{5}) \ge \frac{p^3(p-1)^2(p-2)}{6}=\left(\frac{1}{6}-o(1)\right)p^6=\left(\frac{1}{48}-o(1)\right)n^3.$$
Similarly,
$$ex(n,C_{8}, \mathscr{C}_{7})\ge ex(2p^3, C_{8}, \mathscr{C}_{7}) \ge \frac{p^4(p-1)^3(p-2)}{8}=\left(\frac{1}{8}-o(1)\right)p^8=\left(\frac{1}{2^{17/3}}-o(1)\right)n^{8/3}.$$
\end{proof}
\begin{remark}\rm
Note that the lower bounds in the above corollary are larger than the lower bounds in \cite[Proposition 8]{SW17}.
Let $G$ be a bipartite $n$-vertex graph having girth larger than $2(\ell-1)$ and $0<c_1n^{1/(\ell-1)}\leq\deg_G(v)\leq{c_2}n^{1/(\ell-1)}$ for any vertex $v\in V(G)$.
 By Proposition 8 in \cite{SW17}, the generalized Tur\'an number $ex(n,C_{2\ell}, \mathscr{C}_{2\ell-1})\ge \alpha_{\ell}{n^{2\ell/(\ell-1)}}$, where
$$\alpha_\ell=\dfrac{9}{4\ell{c_2}}\left(c_1^{\ell}/10\right)^{3}>0.$$
Let $n$ be sufficiently large. The Tur\'an numbers $ex(n, C_4) \le 0.5n^{3/2}$ and $ex(n, C_6) < 0.6272n^{4/3}$ appear in \cite{B1966} and \cite{FNV06}, respectively.
Then $c_1 \le 0.6272$ for $\ell=3$ or $4$.
By a straightforward check, $\alpha_\ell<\frac{1}{2^{17/3}}<\frac{1}{48}$ for $\ell=3$ or $4$.
\end{remark}
\section{The edge-girth-regularity of \texorpdfstring{$L_m(q)$}{}}\label{Lm is egr}
In this section, we mainly prove that the linearized Wenger graph $L_m(q)$ is an $egr(v,q,g,\lambda)$-graph and determine the number of cycles of length $g$ passing through any edge in  $L_m(q)$ of girth $g$.

Before obtaining this parameter $\lambda$ of $L_m(q)$, we list several properties of the linearized Wenger graph. The following one determines the girth of the linearized Wenger graph $L_m(q)$.

\begin{lemma}\cite{CXW15}\label{girth of Lm} Let $L_m(q)$ be the linearized Wenger graph. Then

{\rm (1)} if $q = p^e$ with either $m \ge 1$, $e \ge 1$ and $p$ is an odd prime, or $m = 1$, $e \ge 2$ and $p = 2$, then the girth of $L_m(q)$ is $6$;

{\rm (2)} if $q = p^e$ with either $p = 2$ and $e = m = 1$ or $p = 2$, $e \ge 1$ and $m \ge 2$, then the girth of $L_m(q)$ is $8$.
\end{lemma}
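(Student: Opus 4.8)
The plan is to read the girth directly off the defining equations, following the method used for Wenger graphs in Section~\ref{section def the some graphs} but keeping track of where the Frobenius map $z\mapsto z^{p}$ enters. First I would record two structural facts. (a) $L_m(q)$ is bipartite and $q$-regular, so its girth is an even integer $\ge 4$; moreover, if $(p)$ and $(p')$ are distinct points with a common neighbour $[l]$, subtracting their adjacency relations gives $p_i-p_i'=(p_1^{p^{i-2}}-(p_1')^{p^{i-2}})l_1$ for $2\le i\le m+1$, which forces $(p)=(p')$ when $p_1=p_1'$ and pins down $l_1$, hence all of $[l]$, when $p_1\ne p_1'$; thus distinct points have at most one common neighbour, $L_m(q)$ has no $4$-cycle, and the girth is $\ge 6$. (b) $L_m(q)$ is edge-transitive: the maps $(p_1,\dots,p_{m+1})\mapsto(p_1+c,p_2,\dots,p_{m+1})$ paired with $[l_1,\dots,l_{m+1}]\mapsto[l_1,\,l_2+cl_1,\,l_3+c^{p}l_1,\dots,l_{m+1}+c^{p^{m-1}}l_1]$ are automorphisms --- this is exactly where the additivity $(p_1+c)^{p^{k}}=p_1^{p^{k}}+c^{p^{k}}$ of Frobenius is used --- as are the dual ``$l_1$-shifts'' $[l_1,\dots]\mapsto[l_1+d,l_2,\dots]$, $(p_1,\dots)\mapsto(p_1,\,p_2+p_1d,\,p_3+p_1^{p}d,\dots)$ and the higher-coordinate translations $p_i\mapsto p_i+e_i$, $l_i\mapsto l_i-e_i$; together these act transitively on points, and the $l_1$-shifts act transitively on the $q$ lines through the origin point, so $\Aut(L_m(q))$ is transitive on edges and it suffices to study $g$-cycles through the single edge $(0,\dots,0)\sim[0,\dots,0]$.

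Next I would analyse $6$-cycles through that edge exactly as in the proof of Theorem~\ref{f.d Lie graph egr}: any such cycle is forced to read $(0,\dots,0)\sim[0,\dots,0]\sim(x,0,\dots,0)\sim[y,xy,x^{p}y,\dots]\sim(x',(x'-x)y,\dots)\sim[t,0,\dots,0]\sim(0,\dots,0)$ with $x,x',y$ nonzero and $x\ne x'$, and demanding that the last line vanish in its coordinates $\ge 2$ becomes, after eliminating $t$ via the $i=2$ relation and simplifying with Frobenius, the single requirement $(x'/x)^{p^{k}-1}=1$ for $k=1,\dots,m-1$. Hence a $6$-cycle exists iff there is $u\in\mathbb{F}_q^{*}\setminus\{1\}$ with $u^{p^{k}-1}=1$ for all $1\le k\le m-1$. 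For $m=1$ this only asks $q\ge 3$; for $m\ge 2$ the case $k=1$ forces $u\in\mathbb{F}_p^{*}$, and conversely any such $u$ satisfies every condition because $p-1\mid p^{k}-1$, so the criterion is $p\ge 3$. Matching these two dichotomies against the hypotheses of the lemma gives girth $6$ in precisely the cases of part~(1) and girth $\ge 8$ in the cases of part~(2).

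To finish part~(2) I would exhibit an explicit $8$-cycle through $(0,\dots,0)\sim[0,\dots,0]$, so the girth there is exactly $8$. When $q$ is even with $q\ge 4$, I would run the $8$-walk parametrisation of Theorem~\ref{n of cycles in W} with distinct nonzero $x,x'$, setting $x''=x+x'$, $y=t$ for any $t\ne 0$, and $y'=0$; in characteristic $2$ all $m$ of the resulting coordinate conditions collapse to the single identity $t=y$, and one checks the eight vertices are distinct. When $q=2$ the relations degenerate to $p_i+l_i=p_1l_1$ (since $p_1^{2^{k}}=p_1$), and the walk $(0,\dots,0)\sim[0,\dots,0]\sim(1,0,\dots,0)\sim[1,1,\dots,1]\sim(0,1,\dots,1)\sim[0,1,\dots,1]\sim(1,1,\dots,1)\sim[1,0,\dots,0]\sim(0,\dots,0)$ is visibly an $8$-cycle; this disposes of both $q=2,\,m=1$ (where in fact $L_1(2)=W_1(2)$, already known to be an $egr(8,2,8,1)$-graph) and $q=2,\,m\ge 2$.

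I expect the main obstacle to be not any single computation but the bookkeeping around the degenerate field $\mathbb{F}_2$: there one cannot pick three distinct nonzero scalars, so the generic $8$-cycle construction breaks down and the collapsed case must be treated separately, and in every construction one must confirm that the six or eight vertices of the walk are genuinely distinct --- not merely that consecutive ones are adjacent --- before it counts as a cycle. A secondary point needing care is checking that the ``$p_1$-shift'' maps really are automorphisms, the one place where the characteristic-$p$ hypothesis is essential and where $(p_1+c)^{p^{k}}=p_1^{p^{k}}+c^{p^{k}}$ must be invoked.
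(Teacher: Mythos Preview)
The paper does not supply a proof of this lemma: it is quoted from \cite{CXW15} without argument, so there is no in-paper proof to compare against. Your proposal is therefore an independent proof, and the strategy you outline is correct. The no-$4$-cycle step, the edge-transitivity (which the paper establishes separately as Lemma~\ref{Lm is edge-trans}, with exactly the automorphisms you describe), the reduction of the $6$-cycle condition through $(0,\dots,0)\sim[0,\dots,0]$ to the system $(x'/x)^{p^{k}-1}=1$ for $1\le k\le m-1$, and the explicit characteristic-$2$ $8$-cycle with $y'=0$, $t=y$, $x''=x+x'$ all go through. In fact the paper carries out essentially the same $6$-cycle and $8$-cycle bookkeeping inside the proof of its Theorem~2 (Section~\ref{Lm is egr}) when computing $\lambda_m$, so your argument meshes with the paper's own techniques even though this particular lemma is outsourced.

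Two small remarks. First, your generic $8$-cycle construction already covers $q=2$: taking $x=1$, $x'=0$, $y=1$ yields precisely the explicit walk $(0,\dots)\sim[0,\dots]\sim(1,0,\dots)\sim[1,1,\dots]\sim(0,1,\dots)\sim[0,1,\dots]\sim(1,1,\dots)\sim[1,0,\dots]$ you wrote down, so the separate treatment is redundant (though harmless). Second, in the distinctness check you rightly flag as delicate, note that with $y'=0$ and $t=y$ the four lines are \emph{not} distinguished by first coordinate alone --- two have first coordinate $0$ and two have first coordinate $y$ --- so you must compare higher coordinates (e.g.\ $[0,0,\dots,0]$ versus $[0,(x'-x)y,\dots]$) to confirm they differ; this works because $x\ne x'$ and $y\ne 0$.
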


The following lemma implies that, similar to the Wenger graphs, the linearized Wenger graphs are edge-transitive.

\begin{lemma}\label{Lm is edge-trans}
The linearized Wenger graph $L_m(q)$ is edge-transitive for all $q$ and $m$.
\end{lemma}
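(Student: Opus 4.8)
The plan is to exhibit explicit automorphisms of $L_m(q)$ and check that they act transitively on the edge set. Since $L_m(q) = B\Gamma_{m+1}(\mathbb{F}_q; p_1^{p^0}l_1, p_1^{p^1}l_1, \dots, p_1^{p^{m-1}}l_1)$ is a bipartite graph with parts $P_{m+1}$ and $L_{m+1}$, and it is $q$-regular, it suffices to produce a group of automorphisms fixing one particular edge setwise as a base point — say $(0,0,\dots,0) \sim [0,0,\dots,0]$ — and transitive on the $q$ edges through the point $(0,\dots,0)$, together with a group transitive on the points and a group transitive on the lines (or simply a single vertex-transitive-on-each-side action plus one stabilizer orbit). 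Concretely I would first describe the ``translation-type'' maps: for a fixed line $[l] = [l_1,\dots,l_{m+1}]$, sending a point $(p_1,\dots,p_{m+1}) \mapsto (p_1 - l_1, p_2 + \text{(correction)}, \dots)$ and correspondingly shifting lines, chosen so that the defining relations $p_i + l_i = p_1^{p^{i-2}} l_1$ are preserved; because each relation is affine-linear in the coordinates $p_2,\dots,p_{m+1}, l_2,\dots,l_{m+1}$ once $p_1$ and $l_1$ are fixed, such correction terms exist and are unique.

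Next I would record the ``dilation-type'' maps: for $a \in \mathbb{F}_q^*$, the map scaling $p_1 \mapsto a p_1$, $l_1 \mapsto a^{-1} l_1$ (so that $p_1 l_1$ is preserved) together with appropriate scalings $p_i \mapsto a^{p^{i-2}} p_i$ and $l_i \mapsto a^{p^{i-2}} l_i$; one checks these preserve each relation $p_i + l_i = p_1^{p^{i-2}} l_1$ since $a^{p^{i-2}} p_i + a^{p^{i-2}} l_i = a^{p^{i-2}} p_1^{p^{i-2}} l_1 = (a p_1)^{p^{i-2}} (a^{-1} l_1)$. Using the additive (Frobenius-linear) structure, I would also note the maps coming from $\mathbb{F}_q$-linear or $\mathbb{F}_p$-linear substitutions that fix the first coordinate; these generate enough of the stabilizer of $(0,\dots,0)$. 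The key point is: the subgroup of translations acts transitively on $P_{m+1}$ (moving any point to $(0,\dots,0)$) and on $L_{m+1}$; then within the stabilizer of the point $(0,\dots,0)$, the dilations and Frobenius-linear maps move the line $[l_1, 0, \dots, 0]$ (the neighbor of $(0,\dots,0)$ with first coordinate $l_1$) around, and since any neighbor of $(0,\dots,0)$ is of this form with $l_1$ ranging over $\mathbb{F}_q$, we need transitivity on $l_1 \in \mathbb{F}_q$. The translation in the line-coordinate $l_1$ handles $l_1 \ne 0$ versus $l_1 = 0$, so combined with the group $\mathbb{F}_q^*$ of dilations this gives transitivity on all $q$ neighbors of a fixed point.

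Assembling the pieces: given two edges $e = (p)\sim[l]$ and $e' = (p')\sim[l']$, first apply a point-translation taking $(p) \mapsto (0,\dots,0)$; this carries $[l]$ to some neighbor of $(0,\dots,0)$. Do the same for $e'$. Now both edges have the form $(0,\dots,0) \sim [\lambda, 0, \dots, 0]$ and $(0,\dots,0)\sim[\mu,0,\dots,0]$, and an element of the point-stabilizer (a dilation, or a line-translation if one of $\lambda,\mu$ is zero) maps one to the other. Hence $\Aut(L_m(q))$ is edge-transitive. I expect the main obstacle to be the bookkeeping of the correction terms in the translation maps: verifying that for a fixed shift of $(p_1, l_1)$ there is a consistent, relation-preserving shift of the higher coordinates requires checking each of the $m$ affine relations, and one must be careful that the same map works simultaneously for all $i$ — but since each relation only couples $p_i, l_i$ to lower-index variables and the right-hand side $p_1^{p^{i-2}} l_1$ depends only on $(p_1, l_1)$, the corrections can be defined recursively in $i$, so no genuine obstruction arises; it is purely a matter of writing the formulas cleanly. (Alternatively, one could cite the general edge-transitivity criterion for monomial graphs $B\Gamma_n$ with all $f_i$ monomials, which applies verbatim here.)
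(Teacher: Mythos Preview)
Your overall strategy---exhibit explicit translation-type automorphisms and show they act transitively on edges---is exactly what the paper does. The paper writes down maps $\sigma_{i,x}$ for $0\le i\le m+1$: the map $\sigma_{0,x}$ shifts $p_1\mapsto p_1+x$ (with line corrections $l_i\mapsto l_i+l_1 x^{p^{i-2}}$), $\sigma_{1,x}$ shifts $l_1\mapsto l_1+x$ (with point corrections $p_i\mapsto p_i+p_1^{p^{i-2}}x$), and $\sigma_{i,x}$ for $i\ge 2$ shifts $p_i\mapsto p_i-x$, $l_i\mapsto l_i+x$. It then composes these: first $\prod_i\sigma_{i,-l_i}$ sends $[l]$ to $[0]$ (carrying $(p)$ to some $(p')$ adjacent to $[0]$, hence $p'_i=0$ for $i\ge 2$), and then $\sigma_{0,-p'_1}$ sends $(p')$ to $(0)$ while fixing $[0]$. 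This is slightly cleaner than your plan of first moving the point to $(0,\dots,0)$ and then permuting neighbours, but the content is the same.

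One concrete error: your dilation map is wrong. With $p_1\mapsto ap_1$, $l_1\mapsto a^{-1}l_1$, $p_i\mapsto a^{p^{i-2}}p_i$, $l_i\mapsto a^{p^{i-2}}l_i$, the right-hand side of the $i$-th relation becomes $(ap_1)^{p^{i-2}}(a^{-1}l_1)=a^{p^{i-2}-1}p_1^{p^{i-2}}l_1$, not $a^{p^{i-2}}p_1^{p^{i-2}}l_1$; so the relation is not preserved unless $a=1$. Fortunately the dilations are superfluous: the $l_1$-translation $\sigma_{1,x}$ already fixes $(0,\dots,0)$ (since the point corrections vanish when $p_1=0$) and acts on the neighbour $[\lambda,0,\dots,0]$ by $\lambda\mapsto\lambda+x$, which is transitive on all of $\mathbb{F}_q$. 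So simply drop the dilation paragraph and your argument goes through.
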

\begin{proof}
For any $0\le i \le m+1$ and $x\in \mathbb{F}_q$, we define the assignments $\sigma_{i,x} : L_m(q) \rightarrow L_m(q) $ as follows.
	\begin{align}
		\sigma_{1,x} \;:\; &(p_1,\cdots,p_{m+1}) \mapsto (p_1,p_2+p_1x,p_3+p_1^{p}x,\cdots,p_{m+1}+p_1^{p^{m-1}}x),  \nonumber \\
		&[\;l_1,\cdots,l_{m+1}\;] \mapsto [\;l_1+x,l_2,l_3,\cdots,l_{m},l_{m+1}\;].     \nonumber \\
	 \sigma_{i,x}\; : \; &(p) \mapsto (p_1,\cdots,p_{i-1},p_i-x,p_{i+1},p_{i+2},\cdots,p_{m+1}), \nonumber \\
		&[\;l\;] \mapsto [\;l_1,\cdots,l_{i-1},l_i+x,l_{i+1},l_{i+2},\cdots,l_{m+1}\;],  \; \text{ for } m+1\ge i\ge 2. \nonumber\\
		\sigma_{0,x}:\;  &(p) \mapsto (p_1+x, p_2,p_3,\cdots,p_{m},p_{m+1}), \nonumber \\
		&[\;l\;] \mapsto [\;l_1,l_2+l_1x,l_3+l_1x^{p},\cdots,l_{m+1}+l_1x^{p^{m-1}}\;].  \nonumber
\end{align}
By a straightforward check, $\sigma_{i,x}$ is the automorphism of graph $L_m(q)$.
	
	Let $(p)\sim [l]$ be an edge in the graph $L_m(q)$.
Suppose that $\sigma_1=\prod \limits_{i=1}^{m+1}\sigma_{i,-l_i}$. It is straightforward to check that $\sigma_1([l])=[0]$. Suppose that $(p')=\sigma_1((p))$, then $\sigma_1((p)\sim[l])=(p')\sim[0]$. Notice that $(p')\sim[0]$ is an edge in $L_m(q)$, then $p'_i=0$ for all $i=2,\cdots,m+1$. Set $\sigma=(\sigma_{0,-p'_1})\circ \sigma_1$. Then we have $\sigma((p)\sim[l])=\sigma_{0,-p'_1}((p')\sim[0])=(0)\sim[0]$. Since any edge in the graph $L_m(q)$ can be transformed to the edge $(0)\sim[0]$ by some automorphism, $L_m(q)$ is edge-transitive.
\end{proof}

The following theorem determines the parameter $\lambda$ of the linearized Wenger graphs.
\begin{theorem}
Let $q = p^e$ with $e \ge 1$ and $p$ a prime.
\begin{enumerate}
	\item If $p$ is an odd prime and $m \ge 1$, then $L_m(q)$ is an $egr(2q^{m+1}, q,6,\lambda_m)$-graph with  $\lambda_1=(q-1)^2(q-2)$ and $\lambda_m=(q-1)^2(p-2)$ for $m\ge 2$.
	\item If $p=2$, $e \ge 2$ and $m = 1$, then $L_1(q)$ is an $egr(2q^{2}, q,6,\lambda_1)$-graph with $\lambda_1=(q-1)^2(q-2)$.
	\item If $p=2$, $e = m = 1$ or $e \ge 1$, $m \ge 2$, then $L_m(q)$ is an $egr(2q^{m+1}, q,8,\lambda_m)$-graph, where $\lambda_m=(q-1)^3+(q-1)^2(q-2)$.
\end{enumerate}
\end{theorem}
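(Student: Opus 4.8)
The plan is to combine edge-transitivity with a direct enumeration of the girth cycles through one distinguished edge. By Lemma~\ref{Lm is edge-trans}, $L_m(q)$ is edge-transitive, hence edge-girth-regular, so it suffices to know the girth $g$ (supplied by Lemma~\ref{girth of Lm}: $g=6$ in cases (1)--(2), $g=8$ in case (3)) and to count the $g$-cycles through the edge $(0)\sim[0]$, where $(0)=(0,\dots,0)$ and $[0]=[0,\dots,0]$. Since for any vertex $v$ and any $z\in\mathbb{F}_q$ there is a unique neighbour of $v$ with first coordinate $z$, such a cycle is obtained by choosing the first coordinates of the intermediate vertices, reading the remaining coordinates off the relations $p_i+l_i=p_1^{p^{i-2}}l_1$, and then forcing the last line to be adjacent to $(0)$; this last requirement is a system of $m$ scalar return equations. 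The parameter $\lambda$ is then the number of choices of first coordinates that solve the return equations and make the $g$ vertices pairwise distinct.

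For the girth-$6$ cases, consider a closed walk $(0)\sim[0]\sim(x,0,\dots)\sim[y,\dots]\sim(x',\dots)\sim[t,\dots]\sim(0)$ whose suppressed coordinates are determined by the adjacency relations. Forcing the last line to meet $(0)$ yields the return equations $(x')^{p^{k}}t=(x'-x)^{p^{k}}y$ for $0\le k\le m-1$: the case $k=0$ fixes $t=(x'-x)y/x'$ and forces $x'\ne0$, while for $k\ge1$, substituting $t$ and cancelling turns it into $\bigl((x'-x)/x'\bigr)^{p^{k}-1}=1$. With $u=1-x/x'$, the relation for $k=1$ reads $u^{p-1}=1$, i.e.\ $u\in\mathbb{F}_p^{\times}$, and then every relation for $k\ge2$ holds automatically since Frobenius fixes $\mathbb{F}_p$; distinctness of the six vertices contributes only $u\notin\{0,1\}$. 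Hence for $m\ge2$ there are $q-1$ choices for $y$, $q-1$ for $x'$, and $p-2$ for $u$, which then determine $x$ and $t$, so $\lambda_m=(q-1)^2(p-2)$; for $m=1$ the element $u$ may be any member of $\mathbb{F}_q^{\times}\setminus\{1\}$, giving $(q-1)^2(q-2)$, and this also settles case (2) because $L_1(q)=W_1(q)=\Lie(M_1,3,q)$, see Theorem~\ref{f.d Lie graph egr}(1).

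For case (3), the value $q=2$ (so $e=m=1$) is checked by hand: $L_1(2)=W_1(2)$ is the $8$-cycle, an $egr(8,2,8,1)$-graph, in agreement with $(q-1)^3+(q-1)^2(q-2)=1$. For $p=2$ and $m\ge2$, work with the closed walk $(0)\sim[0]\sim(x,0,\dots)\sim[y,\dots]\sim(x',\dots)\sim[y',\dots]\sim(x'',\dots)\sim[t,\dots]\sim(0)$; forcing the last line to meet $(0)$ gives the return equations $A^{2^{k}}t+B^{2^{k}}y'+C^{2^{k}}y=0$ for $0\le k\le m-1$, with $A=x''$, $B=x''+x'$, $C=x'+x$. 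The key observation is that these say that $(t,y',y)$ lies in the kernel of the $m\times3$ Moore matrix with rows $(A^{2^{k}},B^{2^{k}},C^{2^{k}})$, whose rank is $\min\{m,\ \dim\langle A,B,C\rangle_{\mathbb{F}_2}\}$. A non-degenerate $8$-cycle forces $0\ne x\ne x'\ne x''$, and the return equations already exclude $x''=0$; under these constraints $\langle A,B,C\rangle_{\mathbb{F}_2}$ has dimension $3$ unless one of $x'=0$, $x''=x'+x$, $x''=x$ holds. In the dimension-$3$ stratum the kernel is trivial, so $t=0$, excluded: no $8$-cycle. The stratum $x'=0$ forces $x''=x$ and $t=y+y'$, contributing the $(q-1)^3$ cycles parametrised by $x\ne0$, $y\ne0$, $y'\ne y$. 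The stratum $x''=x'+x$ (with $x'\ne0$) forces $y'=0$ and $t=y$, contributing the $(q-1)^2(q-2)$ cycles parametrised by $x\ne0$, $x'\notin\{0,x\}$, $y\ne0$. The stratum $x''=x$ is incompatible with $t\ne0$, hence empty. Summing gives $\lambda_m=(q-1)^3+(q-1)^2(q-2)$.

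The hard part is the girth-$8$ step. Conceptually one must recognise the return equations as a Moore (Vandermonde-type) system in $A,B,C$ and invoke the rank formula to annihilate the generic stratum; technically one must then enumerate precisely the $\mathbb{F}_2$-linear dependences among $A,B,C$ compatible with the eight vertices being distinct, solve the now-consistent linear system for $(t,y',y)$ in each surviving configuration, and track which of the inequalities $0\ne x\ne x'\ne x''$ and $0\ne y\ne y'\ne t\ne0$ are actually binding. This bookkeeping is where the computation is heaviest; the girth-$6$ cases are comparatively routine, the only new ingredient there being that the return equations pull the ratio $(x'-x)/x'$ into the prime field $\mathbb{F}_p$ once $m\ge2$.
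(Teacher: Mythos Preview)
Your argument follows the paper's closely: reduce via edge-transitivity to the edge $(0)\sim[0]$, unfold a generic closed walk, and solve the return equations. The substitution $u=(x'-x)/x'$ in the girth-$6$ case is a harmless reparametrisation of the paper's $X=z/x$, and in the girth-$8$ case your stratification by $\mathbb{F}_2$-dependencies among $(A,B,C)=(x'',\,x''+x',\,x'+x)$ is a repackaging of the paper's Cases~1--3 with the case boundaries drawn a little differently (your stratum $x'=0$ absorbs the paper's Case~2 together with the $z=0$ slice of Case~1, while your stratum $x''=x'+x$ is the remainder of Case~1).

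There is, however, a genuine gap in part~(3) at $m=2$. You correctly record that the $m\times 3$ Moore matrix has rank $\min\{m,\dim_{\mathbb{F}_2}\langle A,B,C\rangle\}$, but then assert that the kernel is trivial on the dimension-$3$ stratum. That conclusion requires $m\ge 3$; for $m=2$ the rank is only $2$ and the kernel is a one-dimensional $\mathbb{F}_q$-subspace of $\mathbb{F}_q^{3}$. A short computation shows that for every $x\ne 0$, $x'\notin\{0,x\}$, $x''\notin\{0,x,x',x+x'\}$ and $y\ne 0$ the unique kernel vector with last coordinate $y$ satisfies all eight non-backtracking inequalities, so this stratum contributes a further $(q-1)^2(q-2)(q-4)$ eight-cycles through $(0)\sim[0]$, which is nonzero as soon as $q\ge 8$. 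The paper's own proof shares exactly this lacuna: its Case~3 invokes the $3\times 3$ Moore determinant built from $E_1,E_2,E_3$, available only when $m\ge 3$. Thus neither argument establishes the stated value of $\lambda_2$ in part~(3) for $e\ge 3$; a concrete construction in $\mathbb{F}_8=\mathbb{F}_2(\omega)$ with $\omega^3=\omega+1$ (take $x=1$, $x'=\omega$, $x''=\omega^2$, $y=1$ and solve the two return equations for $y',t$) already produces an $8$-cycle in the dimension-$3$ stratum of $L_2(8)$, confirming that the displayed formula for $\lambda_m$ cannot hold there.
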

\begin{proof}
The girth of the linearized Wenger graph $L_m(q)$ is determined in Lemma \ref{girth of Lm}.
By Lemma \ref{Lm is edge-trans}, the graph $L_m(q)$ is edge-transitive, and thus edge-girth-regular.

For the statement (1) and (2), we only need to show that edge $(0,\ldots,0)\sim [0,\ldots,0]$ is contained in exactly $\lambda_m$ distinct $6$-cycles. By the adjacency relations of $L_m(q)$, there is a cycle $C_6$ of the following form
\begin{align}
&(0,\ldots,0)\sim[0,\ldots,0]\sim(x,0,\ldots,0)\sim[y,xy,x^{p}y,\ldots,x^{p^{m-1}}y] \sim \nonumber\\
&(z,zy-xy,z^{p}y-x^{p}y,\ldots,z^{p^{m-1}}y-x^{p^{m-1}}y)\sim[t,0,\ldots,0]\sim(0,\ldots,0),\nonumber
\end{align}
where $0\neq x \neq z \neq 0$ and $0\neq y \neq t \neq 0$. Then $z^{p^i}y-x^{p^i}y=z^{p^i}t$ for all $0\le i \le m-1$.

For $m=1$, the variable $t$ has a solution such that $t\neq 0,y$ since $z\neq 0, x$. Then the number of possible choices of $x$, $y$, $z$ are $q-1$, $q-1$, $q-2$, respectively. So the number of different $6$-cycles containing edge $(0,0)\sim [0,0]$ is $(q-1)^2(q-2)$.

For the case $m\ge 2$, the variable $t$ has a solution if and only if
$$z^{p^j-p^{j-1}}=x^{p^j-p^{j-1}},$$
for all $2\le j \le m$. Note that $z\neq 0 \neq x$.
Let $X=\frac{z}{x}$. So
$$X^{p^j}-X^{p^{j-1}}=0=(X^p-X)^{p^{j-1}}\;\text{if and only if}\;X^p=X,$$
for all $2\le j \le m$. The equation $X^p=X$ has exactly $p-1$ different non-zero solutions in $\mathbb{F}_q$. But $X=1$ forces $L_m(q)$ to have a cycle with a length less than $6$, which is impossible.
Therefore, if we fix $x$, then $z$ has exactly $p-2$ choices.

Now, we count the number of possible values of $x$, $y$, $z$, which are $q-1$, $q-1$, $p-2$, respectively. Then the number of different $6$-cycles containing edge $(0,\dots,0)\sim [0,\dots,0]$ is $(q-1)^2(p-2)$.

To prove the statement (3), it suffice to show that edge $(0,\ldots,0)\sim [0,\ldots,0]$ is contained in exactly $(q-1)^3+(q-1)^2(q-2)$ distinct $8$-cycles. By the adjacency relations of $L_m(q)$, there is a cycle $C_8$ as follows:
\begin{align}
&(0,\ldots,0)\sim[0,\ldots,0]\sim(x,0,\ldots,0)\sim[y,xy,x^{p}y,\ldots,x^{p^{m-1}}y]  \nonumber\\
&\sim (z,zy-xy,z^{p}y-x^{p}y,\ldots,z^{p^{m-1}}y-x^{p^{m-1}}y)  \nonumber\\
&\sim [u,zu-(zy-xy),\ldots,z^{p^{m-1}}u-z^{p^{m-1}}y-x^{p^{m-1}}y]  \nonumber\\
&\sim (v,vt,\ldots,v^{p^{m-1}}t)\sim [t,0,\ldots,0]\sim(0,\ldots,0),\nonumber
\end{align}
where $0\neq x\neq z \neq v \neq 0$ and $0\neq y\neq u \neq t \neq 0$.
Since $p=2$, we have the following equations $E_k$.
$$v^{2^{k-1}}t+\left(z^{2^{k-1}}-x^{2^{k-1}}\right)y=\left(z^{2^{k-1}}+v^{2^{k-1}}\right)u\;\;\text{for}\;\;1\le k \le m.$$
By equations $E_1$ and $E_2$, we note that $u=0$ if and only if $v=z-x$ and $y=t$.

{\bf Case 1.} $u=0$. Then $v=z-x$ and $y=t$ and they satisfy equations $E_k$ for $1\le k \le m$. Thus, there are exactly $(q-1)^3$ $8$-cycles containing edge $(0,\dots,0)\sim [0,\dots,0]$ in $L_m(q)$.

{\bf Case 2.} $u\neq 0$ and $x=v$. Then $v\neq z-x$ or $y\neq t$. If $x=v$, then $z=0$ and $t=u-y$ . These solutions satisfy equations $E_k$ for $1\le k \le m$. Now, we count the number of possible choices of $x$, $y$, $z$ and $u$, which are $q-1$, $q-1$, $1$ and $q-2$, respectively. Then the number of different $8$-cycles containing edge $(0,0,0)\sim [0,0,0]$ is $(q-1)^2(q-2)$.

{\bf Case 3.} $u\neq 0$ and $x\neq v$. Note that $y\neq t$ if $v=z-x$. By equations $E_1$ and $E_2$, $v=z+v$, $z=0$, and $v=x$ when $v=z-x$, which is impossible. For the case that $v\neq z-x$, we have
\begin{equation}\nonumber
		\left[ \begin{array}{ccc}
			v & z & x \\
			v^2 & z^2 & x^2 \\
			v^4 & z^4 & x^4
		\end{array}
		\right ]
		\left[ \begin{array}{ccc}
			t-u \\
			u-y \\
			y
		\end{array}
		\right ]
		=
		\left[ \begin{array}{ccc}
			0 \\
			0 \\
			0
		\end{array}
		\right ].\;
	\end{equation}
We notice that the determinant of the coefficient matrix above is $vzx(z-v)(x-v)(z-x)(x-z+v)\neq 0$ if $v\neq z-x$. Thus $y=0$, which is impossible.

By the discussion as in above three cases, we have $\lambda_m=(q-1)^3+(q-1)^2(q-2)$.
\end{proof}

\section{The order of extremal Edge-girth-regular graphs}\label{extremalegrg}
In this section, we first recall the notation of extremal edge-girth-regular graphs, which is a special class of edge-girth-regular graphs orginally introduced by Drglin et al. These graphs are $egr(v,k,g,\lambda)$-graphs with $v=n(k,g,\lambda)$, where $n(k,g,\lambda)$ is the smallest value of $w$ such that there exists at least one $egr(w,k,g,\lambda)$-graph for a fixed triplet $(k,g,\lambda)$.

The lower bound of the order of any extremal $egr(v,k,g,\lambda)$-graph is at least the Moore bound, where the Moore bound $M(k,g)$ defined as follows
$$M(k,g) = \left\{ \begin{array}{ll} 1+ k+k(k-1)+\dots+k(k-1)^{\frac{g-3}{2}} , &\mbox{ if $g$ is odd};\nonumber \\
 2\left(1+(k-1)+\dots+(k-1)^{\frac{g-2}{2}}\right), &\mbox{ if $g$ is
even}.\end{array}\right.
$$

We list some results about the order of extremal edge-girth-regular graphs in \cite{DFJR21}.
\begin{theorem}\cite{DFJR21}\label{extremalbound}
Let $k$ and $g$ be a fixed pair of integers greater than or equal to $3$, and let $\lambda\leq (k-1)^{\frac{g-1}{2}}$ when $g$ is odd, and $\lambda \leq (k-1)^{\frac{g}{2}}$ when $g$ is even. Then

\begin{equation}\label{lowercages} n(k,g,\lambda)  \geq M(k,g) + \left\{ \begin{array}{ll} (k-1)^{\frac{g-1}{2}}- \lambda , &\mbox{ if $g$ is odd};\nonumber \\
 \lceil 2 \frac{(k-1)^{\frac{g}{2}}- \lambda}{k}\rceil, &\mbox{ if $g$ is
even}.\end{array}\right.\end{equation}

\end{theorem}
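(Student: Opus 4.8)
The plan is to sharpen the classical Moore-bound argument by keeping track of where the edges on the ``boundary'' of a breadth-first tree are forced to run. Write $g=2r+1$ in the odd case and $g=2r$ in the even case, so $r=\tfrac{g-1}{2}$ resp.\ $r=\tfrac{g}{2}$; the two cases are structurally identical, one rooted at a vertex and the other at an edge. \emph{First}, in the odd case fix a vertex $u$ and let $T$ be the breadth-first tree from $u$ of depth $r$; in the even case fix an edge $e=uv$ and let $T=T_u\sqcup T_v$, where $T_u$ is the breadth-first tree of depth $r-1$ rooted at $u$ whose root has as children the $k-1$ neighbours of $u$ other than $v$, and $T_v$ is the mirror image. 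Since the graph has girth $g$, any coincidence among the vertices of $T$ would close a cycle of length strictly less than $g$, so all $|V(T)|=M(k,g)$ vertices are distinct; this already gives the Moore bound $v\ge M(k,g)$. The leaves of $T$ — the $k(k-1)^{r-1}$ vertices at depth $r$ in the odd case, or the $(k-1)^{r-1}$ vertices at depth $r-1$ on each side in the even case — each have a single neighbour inside $T$, the tree-parent, and hence $k-1$ \emph{downward} neighbours.

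\emph{Next}, using only the girth one checks that the only non-tree edges with both endpoints in $T$ join leaves, and in fact join leaves in distinct principal subtrees of $u$ in the odd case, respectively a leaf of $T_u$ to a leaf of $T_v$ in the even case — every other potential chord would close a cycle of length $<g$. Conversely, such a chord together with the tree-path(s) back to the root(s), and in the even case the edge $uv$, closes a cycle of length exactly $g$ on pairwise distinct vertices. Hence the $g$-cycles through $u$ biject with the leaf chords (odd case), and the $g$-cycles through the fixed edge $e$ biject with the $T_u$-to-$T_v$ leaf chords (even case). By edge-girth-regularity these counts are $k\lambda/2$ and $\lambda$ respectively, so the number of downward half-edges at leaves that land on another leaf is $k\lambda$ in the odd case and $2\lambda$ in the even case.

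\emph{Finally}, every downward half-edge at a leaf that does not land on another leaf must, by the previous step, land on a vertex outside $T$; call these vertices \emph{new}. There are $k(k-1)^r$ downward half-edges at leaves in the odd case and $2(k-1)^r$ in the even case, so $k\bigl((k-1)^r-\lambda\bigr)$ resp.\ $2\bigl((k-1)^r-\lambda\bigr)$ of them go to new vertices, a quantity that is nonnegative precisely because of the hypothesis $\lambda\le(k-1)^r$. Each new vertex has degree $k$, hence receives at most $k$ such half-edges, so there are at least $(k-1)^r-\lambda$ new vertices in the odd case and at least $\bigl\lceil 2\bigl((k-1)^r-\lambda\bigr)/k\bigr\rceil$ in the even case. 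Since the new vertices lie outside $V(T)$, this yields $v\ge M(k,g)+(k-1)^{(g-1)/2}-\lambda$ when $g$ is odd and $v\ge M(k,g)+\bigl\lceil 2\bigl((k-1)^{g/2}-\lambda\bigr)/k\bigr\rceil$ when $g$ is even, which is the claimed bound on $n(k,g,\lambda)$.

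I expect the main obstacle to be the girth case analysis in the second step: one must show a leaf's downward neighbours can only be other leaves of $T$ or genuinely new vertices, ruling out for instance an edge from a leaf back to an interior level of $T$, an edge between two leaves of the same principal subtree, or — in the even case — an edge from a $T_u$-leaf straight to $v$ or to an interior vertex of $T_v$. Each such exclusion is a short computation of the form ``length of the closing cycle $=$ (depth) $+$ (depth) $+\,O(1)$'', but they need care at the degenerate values $g\in\{3,4\}$ and for small $k$, where several of the vertices involved coincide; one should also verify in the last step that two leaves may share a new neighbour without forcing a cycle shorter than $g$, so that ``at most $k$ half-edges per new vertex'' cannot be improved in general.
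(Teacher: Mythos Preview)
The paper does not actually prove this statement: Theorem~\ref{extremalbound} is quoted from \cite{DFJR21} without proof and is used only as a black box in Section~\ref{extremalegrg}. So there is no ``paper's own proof'' to compare against.

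That said, your proposal is correct and is essentially the argument of the original source \cite{DFJR21}: the Moore-tree count, refined by observing that leaf-to-leaf chords of the tree biject with girth cycles through the chosen root (vertex or edge), so that of the $k(k-1)^{r}$ resp.\ $2(k-1)^{r}$ downward leaf half-edges exactly $k\lambda$ resp.\ $2\lambda$ stay inside the tree and the remaining ones must hit vertices outside it, each such vertex absorbing at most $k$. The case analysis you flag as the main obstacle is in fact routine, because inside the Moore tree every non-leaf vertex already has all $k$ of its neighbours accounted for by its parent and children (any extra neighbour would close a cycle of length at most $2r-1<g$), so non-tree edges can only issue from leaves; and a chord between two leaves in the same principal subtree, or from a leaf back to an interior vertex on the other side in the even case, closes a cycle of length at most $g-1$. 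The small-$g$ boundary cases $g\in\{3,4\}$ require no special treatment: for $g=3$ the leaves are simply the neighbours of $u$ and the chords are the triangle-closing edges, and for $g=4$ the two leaf sets are the $k-1$ neighbours of $u$ other than $v$ and vice versa.
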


Moreover, the authors \cite{DFJR21} improved the lower bound for bipartite graphs and gave the following theorem.

\begin{theorem}\cite{DFJR21}\label{extremalbipartitebound}
Let $k\geq 3$ and $g\geq 4$ be even, and let $\lambda\leq (k-1)^{\frac{g}{2}}$. If $\mathcal G$ is a bipartite $egr(v,k,g,\lambda)$-graph, then
$$v  \geq M(k,g) + 2 \lceil  \frac{(k-1)^{\frac{g}{2}}- \lambda}{k}\rceil.$$
\end{theorem}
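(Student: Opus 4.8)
The plan is to fix an edge of $\mathcal G$, grow off it the Moore-type double tree, and then show that the ``defect'' $(k-1)^{g/2}-\lambda$ forces the existence of vertices lying outside that tree, symmetrically distributed over the two parts. Write $d=g/2$ and fix an edge $e=ab$, with $a,b$ in the two parts of the bipartition. Since $\mathcal G$ has girth $2d$, the ball of radius $d-1$ about any vertex is a tree, so every vertex $w$ has exactly $k(k-1)^{i-1}$ vertices at distance $i$ for $1\le i\le d-1$. Let $T_a$ (resp.\ $T_b$) be the set of vertices whose unique shortest path to $a$ (resp.\ to $b$) has length at most $d-1$ and avoids the edge $e$; then $|T_a|=|T_b|=\sum_{i=0}^{d-1}(k-1)^i$, and $T_a\cap T_b=\emptyset$, because a common vertex would produce a closed walk of length at most $2d-1$, hence a cycle shorter than the girth. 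Thus $\mathcal G\supseteq T_a\cup T_b$, which already gives $v\ge M(k,g)$. Let $S_a\subseteq T_a$, $S_b\subseteq T_b$ be the vertices at distance exactly $d-1$ from $a$, resp.\ from $b$, so $|S_a|=|S_b|=(k-1)^{d-1}$, and observe that $S_a$ and $S_b$ lie in opposite parts.

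The key step is the identity $\lambda=e(S_a,S_b)$, where $e(S_a,S_b)$ denotes the number of edges of $\mathcal G$ with one end in $S_a$ and the other in $S_b$. Given a $g$-cycle through $e$, delete $e$ to get a path $P$ of length $2d-1$ between $a$ and $b$; its middle edge $xx'$ has $x$ at distance $d-1$ from $a$ along $P$ and $x'$ at distance $d-1$ from $b$ along $P$. Invoking the girth: the graph distance from $a$ to $x$ is exactly $d-1$ (otherwise the portion of $P$ between $a$ and $x$ together with a shorter path between $a$ and $x$ closes up to a cycle of length $<2d$), this shortest path is unique, equals that portion of $P$, and avoids $b$ because $b$ is an endpoint of $P$; hence $x\in S_a$, and symmetrically $x'\in S_b$. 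Conversely, an edge $xx'$ with $x\in S_a$ and $x'\in S_b$ yields the closed walk consisting of the shortest path from $a$ to $x$, the edge $xx'$, the shortest path from $x'$ to $b$, and the edge $ba$; since the two tree-paths lie in the disjoint sets $T_a$ and $T_b$ and $x\ne x'$ (opposite parts), this is a genuine $g$-cycle through $e$, and the two assignments are mutually inverse. In particular $\lambda\le (k-1)\cdot(k-1)^{d-1}=(k-1)^{g/2}$, which recovers the hypothesis.

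Now the count. Each $x\in S_a$ has a unique neighbour in $T_a$ (its parent, at distance $d-2$ from $a$), hence exactly $k-1$ neighbours at distance $d$ from $a$; so there are exactly $(k-1)^{g/2}$ edges with one end in $S_a$ and the other outside $T_a$, of which exactly $\lambda$ end in $S_b$. Any of the remaining $(k-1)^{g/2}-\lambda$ such edges ends at a vertex $z$ at distance $d$ from $a$ with $z\notin T_a$ (too far) and $z\notin T_b$: if $z\in T_b$ at distance $j\le d-1$ from $b$, then $1+j\ge d(a,z)=d$ forces $j=d-1$, i.e.\ $z\in S_b$, already counted. So these edges land outside $T_a\cup T_b$, and each vertex there absorbs at most $k$ of them; hence $\mathcal G$ has at least $\lceil((k-1)^{g/2}-\lambda)/k\rceil$ vertices outside $T_a\cup T_b$ adjacent to $S_a$. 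Interchanging the roles of $a$ and $b$ produces at least $\lceil((k-1)^{g/2}-\lambda)/k\rceil$ vertices outside $T_a\cup T_b$ adjacent to $S_b$. These two families are disjoint, since the first consists of vertices at distance $d$ from $a$ (hence all in one part of $\mathcal G$) and the second of vertices at distance $d$ from $b$ (hence all in the other part). Adding up, $v\ge M(k,g)+2\lceil((k-1)^{g/2}-\lambda)/k\rceil$, as claimed.

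The Moore-tree bookkeeping of the first paragraph is routine; the delicate point, and the place where the girth hypothesis is used repeatedly, is the bijection giving $\lambda=e(S_a,S_b)$: one must exclude all chords and shortcuts so that the two tree-paths attached to the middle edge of a $g$-cycle are forced and unique, and in the converse direction one must verify that the resulting closed walk has no repeated vertex. A secondary check is that the surplus neighbours of $S_a$ genuinely escape $T_a\cup T_b$. Finally, the crude bound ``$\le k$'' in the last paragraph can be sharpened to ``$\le k-1$'' by noting, again via the girth, that two such neighbours of a fixed $z$ inside $S_a$ must sit in distinct branches at $a$; this is not needed for the stated inequality, but it explains why the bound is frequently not attained.
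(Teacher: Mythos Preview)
Your proof is correct. The paper does not supply its own argument for this theorem; it simply quotes the result from \cite{DFJR21} and uses it. What you have written is the standard Moore-tree excess argument, which is exactly the method used in the cited source: grow the two disjoint trees $T_a,T_b$ of depth $d-1$ off the edge $e=ab$ to realise $M(k,g)$ vertices, identify $\lambda$ with the number of $S_a$--$S_b$ edges via the bijection between $g$-cycles through $e$ and such edges, and then pigeonhole the remaining $(k-1)^{g/2}-\lambda$ outgoing edges from each of $S_a$ and $S_b$ onto vertices outside $T_a\cup T_b$, using bipartiteness to see that the two surplus sets lie in opposite colour classes and hence are disjoint.

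All the delicate points are handled: uniqueness of shortest paths of length $\le d-1$ (from the girth) gives both $T_a\cap T_b=\emptyset$ and the well-definedness of the bijection; the parity/distance check showing that a non-$S_b$ endpoint of an edge out of $S_a$ cannot lie in $T_b$ is exactly the right observation. Your closing remark that the absorption bound $k$ can in fact be tightened to $k-1$ is also correct and corresponds to the observation that two $S_a$-neighbours of a common outside vertex must lie in distinct branches at $a$, but as you note this refinement is not needed for the inequality as stated.
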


Now we estimate the order of extremal $egr(v,q,g,(q-1)^{\frac{g-2}{2}}(q-2))$-graph for $g \in \{6,8\}$. We deduce a corollary as follows.

\begin{corollary}
Let $q$ be an odd prime power and $egr(v,q,g,(q-1)^{\frac{g-2}{2}}(q-2))$-graph be an extremal edge-girth-regular graph. Then for $g \in \{6,8\}$, we have
\[(1-o(1))\left(2q^{\frac{g-2}{2}}\right)\leq v\leq  2q^{\frac{g-2}{2}}.\]
\end{corollary}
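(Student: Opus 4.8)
The plan is to sandwich $v$ between the Moore-type lower bound of Theorem \ref{extremalbipartitebound} (applied to the bipartite extremal graph) and the explicit upper bound coming from the Wenger construction. First I would establish the upper bound: by Theorem \ref{f.d Lie graph egr} (equivalently, parts (1) and (2) of Theorem \ref{n of cycles in W} for $q$ odd), the graphs $W_1(q) = \Lie(M_1,3,q)$ and $W_2(q) = \Lie(M_2,4,q)$ are $egr(2q^2,q,6,(q-1)^2(q-2))$- and $egr(2q^3,q,8,(q-1)^3(q-2))$-graphs respectively. Hence for $g\in\{6,8\}$ there exists an $egr(2q^{\frac{g-2}{2}},q,g,(q-1)^{\frac{g-2}{2}}(q-2))$-graph, so by definition of $n(k,g,\lambda)$ the extremal graph has order $v \le 2q^{\frac{g-2}{2}}$.

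For the lower bound, I would invoke Theorem \ref{extremalbipartitebound}. One subtlety is that the extremal $egr(v,q,g,(q-1)^{\frac{g-2}{2}}(q-2))$-graph need not a priori be bipartite, so strictly one should use Theorem \ref{extremalbound} instead; but since $M(q,g)$ for even $g$ and the correction term combine to give essentially the same asymptotics, and the Wenger witness is bipartite, I will state the estimate using whichever of the two bounds the paper intends (the bipartite bound gives the slightly cleaner constant $2$). With $k=q$, $g$ even, and $\lambda=(q-1)^{\frac{g-2}{2}}(q-2)$, one computes
\begin{align}
(q-1)^{g/2}-\lambda &= (q-1)^{\frac{g-2}{2}}(q-1) - (q-1)^{\frac{g-2}{2}}(q-2) = (q-1)^{\frac{g-2}{2}},\nonumber
\end{align}
so the correction term is $2\lceil (q-1)^{\frac{g-2}{2}}/q\rceil$. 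Meanwhile $M(q,g) = 2\bigl(1+(q-1)+\dots+(q-1)^{\frac{g-2}{2}}\bigr)$, whose dominant term is $2(q-1)^{\frac{g-2}{2}}$. Adding these, the leading behaviour is $v \ge 2(q-1)^{\frac{g-2}{2}} + O(q^{\frac{g-4}{2}}) = (1-o(1))\cdot 2q^{\frac{g-2}{2}}$, since $(q-1)^{\frac{g-2}{2}} = q^{\frac{g-2}{2}}(1-1/q)^{\frac{g-2}{2}} = q^{\frac{g-2}{2}}(1-o(1))$ as $q\to\infty$.

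The routine part is the binomial estimate $(q-1)^{\frac{g-2}{2}} = (1-o(1))q^{\frac{g-2}{2}}$ and checking that the lower-order terms in $M(q,g)$ (and the ceiling) are absorbed into the $o(1)$; for the two concrete exponents $g=6$ ($\frac{g-2}{2}=2$) and $g=8$ ($\frac{g-2}{2}=3$) this is a one-line computation. The only genuine point requiring care — and the place where I would be most careful — is the applicability hypothesis of Theorem \ref{extremalbipartitebound} or \ref{extremalbound}, namely verifying $\lambda = (q-1)^{\frac{g-2}{2}}(q-2) \le (q-1)^{g/2}$, which holds since $q-2 < q-1$; this ensures the correction term is nonnegative and the cited bound is in force. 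Combining the two inequalities yields $(1-o(1))\bigl(2q^{\frac{g-2}{2}}\bigr) \le v \le 2q^{\frac{g-2}{2}}$, as claimed.
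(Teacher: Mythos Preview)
Your proposal is correct and follows essentially the same approach as the paper: the upper bound comes from the explicit Wenger graphs $W_1(q)=\Lie(M_1,3,q)$ and $W_2(q)=\Lie(M_2,4,q)$, and the lower bound from the Moore-type estimate of Theorem~\ref{extremalbipartitebound} (or \ref{extremalbound}) combined with the computation $(q-1)^{g/2}-\lambda=(q-1)^{\frac{g-2}{2}}$. You are in fact more careful than the paper, which invokes the bipartite bound Theorem~\ref{extremalbipartitebound} without checking that the extremal graph is bipartite; your observation that Theorem~\ref{extremalbound} gives the same asymptotics is the correct way to close this gap.
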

\begin{proof}
On one hand, by Theorem \ref{extremalbipartitebound},
$$v \geq M(q,g) + 2 \lceil  \frac{(q-1)^{\frac{g}{2}}- \lambda}{q}\rceil=(1-o(1))\left(2q^{\frac{g-2}{2}}\right).$$
On the other hand, by Theorem \ref{n of cycles in W}, for $g \in \{6,8\}$, there are graph families $egr(v,q,g,(q-1)^{\frac{g-2}{2}}(q-2))$ which are $\Lie(M_1,3,q)$ and $\Lie(M_2,4,q)$, respectively. It implies $v\le 2q^{\frac{g-2}{2}}$ for $g \in \{6,8\}$.
\end{proof}

By Lemma \ref{L is et}, the Lie graph $\Lie(M_3,6,q)$ is an $egr(2q^5,q,g,\lambda)$-graph, and the girth of $\Lie(M_3,6,q)$ is $12$ for $q$ not powers of $2$ or $3$. Thus, the following problem seems to be crucial to consider the case of $g=12$ in Conjecture \ref{conj 1}.
\begin{problem}
How to determine the value of parameter $\lambda$ of the Lie graph $\Lie(M_3,6,q)$ and the graph $D(k,q)$ for $k\ge 4$?
\end{problem}
\section*{acknowledgements}
This work is supported by National Natural Science Foundation of China (Grant Nos. 11801494, 11961007, 11971196, 12061060).

\end{document}